\newcommand{\blind}{1}
\newtheorem{theorem}{Theorem}
\newcolumntype{L}{>{\displaystyle}l}
\newcolumntype{C}{>{\displaystyle}c}
\begin{document}

\def\spacingset#1{\renewcommand{\baselinestretch}%
{#1}\small\normalsize} \spacingset{1}


\if1\blind
{
  \title{\bf On the symmetric and skew-symmetric K-distributions}
  \author{Stylianos E. Trevlakis, Nestor D. Chatzidiamantis, and \\
  	George K. Karagiannidis \\ 
  	\\
    Department of Electrical and Computer Engineering, \\
    Aristotle University of Thessaloniki, Thessaloniki, 54124, Greece}
  \maketitle
} \fi

\if0\blind
{
  \bigskip
  \bigskip
  \bigskip
  \begin{center}
    {\LARGE\bf On the symmetric and skew-symmetric K-distributions}
\end{center}
  \medskip
} \fi

\bigskip
\begin{abstract}
	We propose a family of four-parameter distributions that contain the K-distribution as special case. The family is derived as a mixture distribution that uses the three-parameter reflected Gamma distribution as parental and the two-parameter Gamma distribution as prior. Properties of the proposed family are investigated as well; these include probability density function, cumulative distribution function, moments, and cumulants. The family is termed \textit{symmetric K-distribution} (SKD) based on its resemblance to the K-distribution as well as its symmetric nature. The standard form of the SKD, which often proves to be an adequate model, is also discussed. Moreover, an order statistics analysis is provided as well as the distributions of the product and ratio of two independent and identical SKD random variables are derived. Finally, a generalisation of the proposed family, which enables non-zero skewness values, is investigated, while both the SKD and the skew-SKD are proven capable of describing the complex dynamics of machine learning, Bayesian analysis and other fields through simplified expressions with high accuracy. 
\end{abstract}

\noindent%
{\it Keywords:}  Bayesian methods, Mathematical statistics, Mixture distributions, Order statistics.
\vfill

\newpage
\section{Introduction}
Mixture distributions enable the modeling of complex dynamics through more simplified mathematical expressions. This way they offer tractable solutions in applications where the individual weighted components of the problem exhibit different characteristics. Mixture distributions play an important role in several scientific fields, such as machine learning, Bayesian analysis, wireless communications, and econometric models (e.g.,~\cite{boulogeorgos2020MLinNanoBio,pedersen2015sparse,8941853,withers2012generalized,TrevlakisOvsE,10.1111/ectj.12068}). A \textit{discrete} or \textit{finite} mixture distribution is a linear combination of two or more distributions, i.e.
\begin{equation}
	f(x)=\sum_{i=1}^{n} \omega_{i} f_{i}(x) ; 0<\omega_{i} < 1 ; \sum_{i=1}^{n} \omega_{i}=1
\end{equation}
where the weights $\omega_i$ can be considered as the probabilities of $n$ sets of the distribution parameters, scale, shape and location. \textit{Gaussian discrete mixtures} are the most well-known and can be applied in several fields as for example in signal processing.

If the probabilities $\omega_1$ are continuous random variables (RVs) and belong to another distribution, then the resulting mixture is termed \textit{continuous mixture distribution}. Next, we focus on the case where one of the parameters, denoted by $\Theta$, is random. Then, the conditional probability density function (PDF) $f _X (x | \theta)$ is termed \textit{parental distribution} and the PDF of $\Theta$, denoted by $f_\Theta(\theta)$, is termed \textit{prior distribution}. In this case the joint PDF of $X$ and $\Theta$ is
\begin{equation}
	f(x, \theta)=f_{X}(x | \theta) f_{\Theta}(\theta)
\end{equation}
and the marginal PDF of $X$ is
\begin{equation} \label{mixture_distribution}
	f(x)=\int f(x, \theta) \mathrm{d} \theta=\int f_{X}(x| \theta) f_{\Theta}(\theta) \mathrm{d} \theta.
\end{equation}
A comprehensive summary of discrete and continuous mixture distributions can be found in~\cite{BookKotz}.

Since its introduction by~\cite{jakeman1976model}, the K-distribution has proved to be remarkably useful for modeling the complex dynamics of various systems, such as wireless communications channel modeling and radar applications (e.g.,~\cite{1633320,wang2019secrecy,7406765}). Over the last couple of years, various derivative distributions of the K distribution have attracted attention due to their successful use in machine learning-based ultrasound image reconstruction, as well as heat transfer (e.g.,~\cite{ZHOU2020106001,liu2020effects}).

Both the symmetric counterpart of the K-distribution, namely SKD, and the skew-SKD that are introduced in this work exhibit flexibility in a wide variety of applications, including data fitting, strength-stress modeling, Bayesian learning, and more (e.g.,~\cite{azzalini1999statistical,gupta2001reliability,MUDHOLKAR2000291}). In this paper, we introduce the 4-parameter symmetric K-distribution (SKD) as  the mixture of  the parental 3-parameter reflected Gamma  and the prior 2-parameter Gamma distributions. We derive the PDF, cumulative distribution function (CDF), moments, cumulants, order statistics, as well as the product and ratio distributions. Moreover, we present and study the skew-SKD.

\textit{Notations:} Throughout, let $f\left(\cdot\right)$ and $F\left(\cdot\right)$ stand for the PDF and CDF of the given RV. Write $\left(a\right)_n$ for the Pochhammer Symbol. Denote by $\exp\left(\cdot\right)$ the exponential function, $\Gamma\left(\cdot\right)$ the Gamma function, $G\left(\cdot\right)$ the Meijer-G function, $_p\!F_q\left(\cdot\right)$ for the generalised hypergeometric function, $K_v\left(\cdot\right)$ for the K-function, and $\mathrm{sgn}\left(\cdot\right)$ the sign function.

The rest of this paper is organised as follows. Section~\ref{S:SKD} introduces the four-parameter SKD alongside the formulas for its PDF, CDF, moments, cumulants, order statistics, product and ratio distributions. The skew-SKD is defined in Section~\ref{S:skew-SKD}. Finally, concluding remarks are provided in~\ref{S:Conclusions}.

\begin{figure}
	\centering
	\begin{subfigure}[b]{0.49\textwidth}
		\centering
		\includegraphics[width=\textwidth]{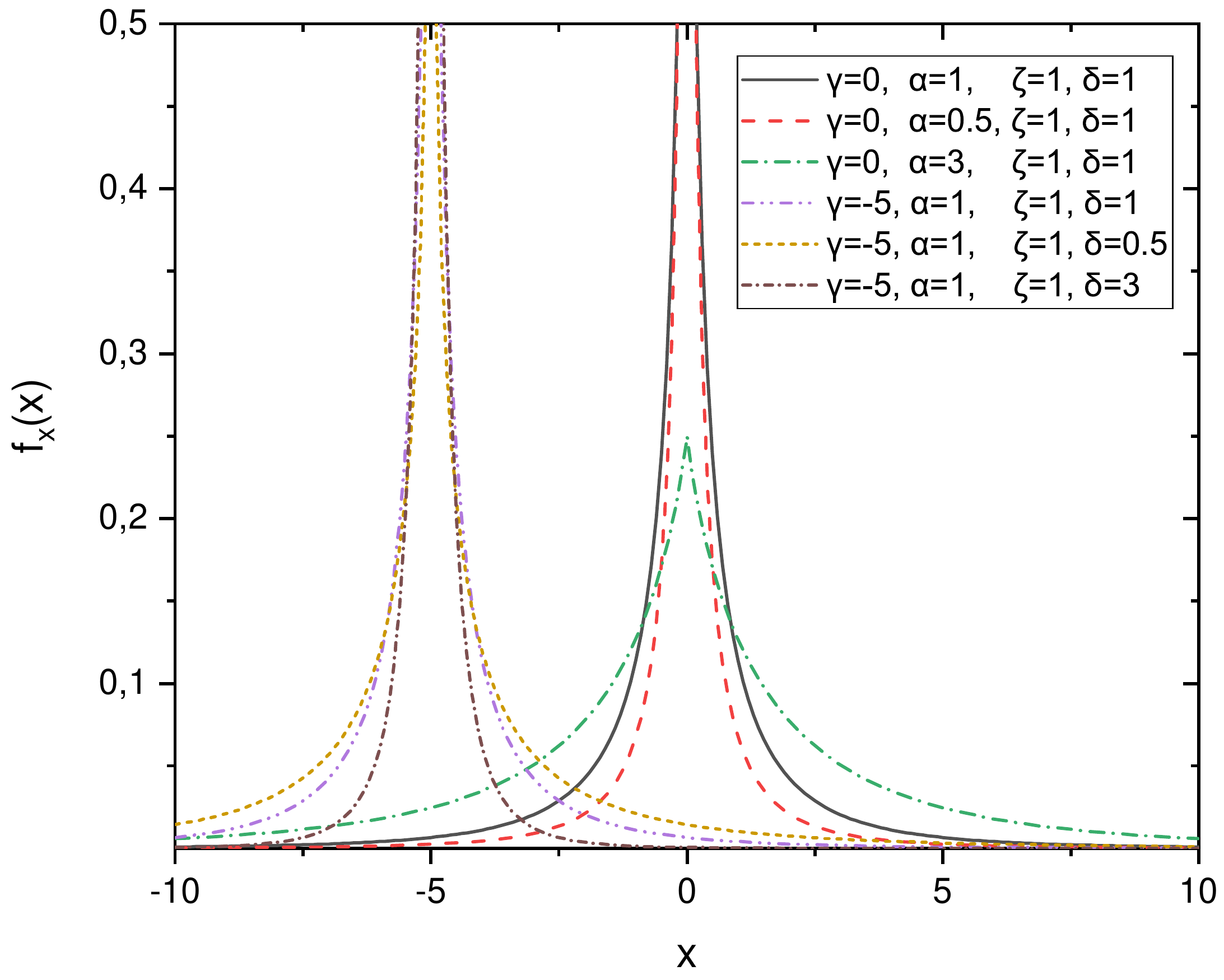}
		\label{fig:PDF}
	\end{subfigure}
	\hfill
	\begin{subfigure}[b]{0.49\textwidth}
		\centering
		\includegraphics[width=\textwidth]{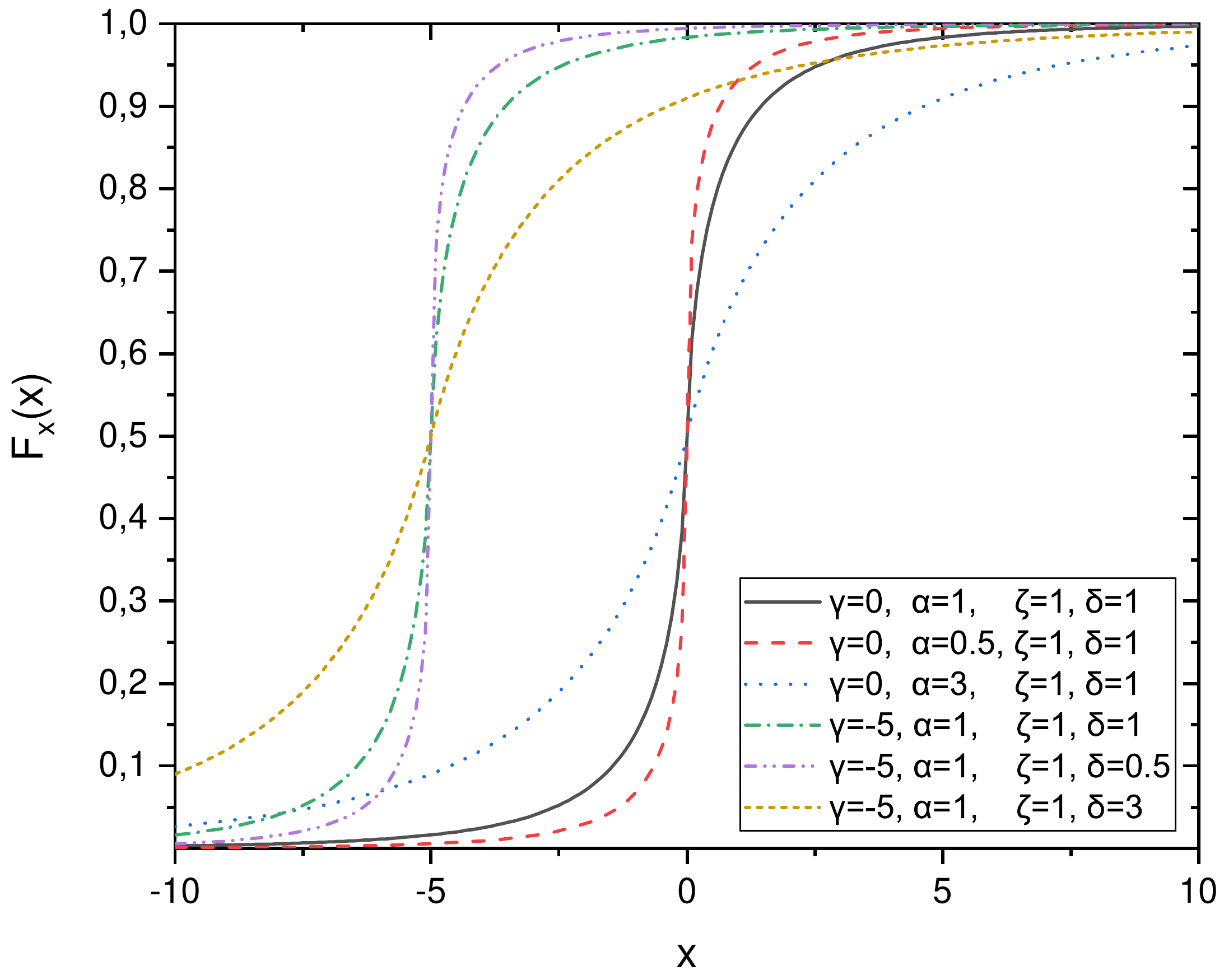}
		\label{fig:CDF}
	\end{subfigure}
	\caption{The (a) PDF and (b) CDF of the SKD for different parameter values.}
\end{figure}

\section{The symmetric K-distribution} \label{S:SKD}
We introduce the SKD as the mixture of  the parental three-parameter reflected Gamma (RG) and the prior two-parameter Gamma distributions. The two-parameters Gamma PDF is given by~\cite[eq. 17.15]{kotz2016continuous}
\begin{equation}\label{eq:Gamma_pdf}
	f(x,\delta,\zeta)=\frac{\delta^{\zeta}x^{\zeta-1}}{\Gamma(\zeta)} \exp (-\delta x), \quad \delta, \zeta >0,
\end{equation}
where $\delta$ and $\zeta$ are the scale and shape parameters, correspondingly. Also, the three-parameters RG PDF is given as in~\cite{BookKotz}
\begin{equation}\label{eq:RG_pdf}
	f(x, \alpha,\beta,\gamma)= \frac{|x-\gamma|^{\alpha-1}}{2\beta^{\alpha} \Gamma(\alpha)} \exp \left(-\frac{|x-\gamma|}{\beta}\right), \quad \alpha, \beta>0,
\end{equation}
where $\alpha, \beta$ and $\gamma$ are the shape, scale and location parameters, correspondingly, while, for $\gamma=0$ and $\beta=1$ the standard form of the RG has the PDF
\begin{align}
	f(x ; \alpha)=\frac{1}{2 \Gamma(\alpha)}|x|^{\alpha-1} e^{-|x|}
\end{align}
and CDF
\begin{equation}
	F(x, \alpha)=\frac{1}{2}-\mathrm{sgn}(x)\frac{\Gamma(\alpha,|x|)}{2 \Gamma(\alpha)}
\end{equation}
where $\mathrm{sgn}(x)$ denotes the sign function and is given by 
\begin{align}
	\mathrm{sgn}(x)=\left\{\begin{array}{ll}
		1, & x \geq 0 \\
		-1, & x<0
	\end{array}\right. .
\end{align}

\subsection{PDF}
According to~\eqref{mixture_distribution}, the PDF of the SKD can be evaluated as follows:
\begin{align}\label{Eq:PDF_1}
	\begin{split}
		f(x)&=\int_{0}^{\infty} \frac{1}{2} \frac{|x-\gamma|^{\alpha-1}}{\beta^{\alpha} \Gamma(\alpha)} \exp \left(-\frac{|x-\gamma|}{\beta}\right) \frac{\delta^{\zeta} \beta^{\zeta-1}}{\Gamma(\zeta)} \exp (-\delta \beta) d\beta \\
		&= \frac{\delta^{\zeta}}{2 \Gamma\left(\alpha\right) \Gamma\left(\zeta\right)} \left|x - \gamma\right|^{\alpha - 1}  \int_{0}^{\infty} \beta^{\zeta - \alpha - 1} \exp \left(-\frac{|x-\gamma|}{\beta} -\delta \beta\right) d\beta .
	\end{split}
\end{align}		
Given the fact that $|x-\gamma|,\delta \in \Re$ and $|x-\gamma|>0$, we can express the integrands according to~\cite[Eq. 2.3.16.1]{B:PrudnI}. Thus, the PDF of the four-parameter SKD can be written as
\begin{equation} \label{Eq:PDF_2}
	f(x)=\frac{\delta^{\frac{\alpha + \zeta}{2}}}{\Gamma\left(\alpha\right) \Gamma\left(\zeta\right)} \left|x - \gamma\right|^{\frac{\alpha + \zeta}{2} - 1} K_{\alpha - \zeta} \left(2\sqrt{\delta \left|x - \gamma\right|}\right) ,
\end{equation}
where $\delta$ and $\gamma$ are the scale and location parameters, and $\alpha$, $\zeta$ are shape parameters. 

For $\gamma=0$ and $\delta=1$, the PDF of the standard form of the SKD is given by
\begin{equation} \label{Eq:PDF_10}
	f_s(x)=\frac{1}{\Gamma\left(\alpha\right) \Gamma\left(\zeta\right)} \left|x\right|^{\frac{\alpha + \zeta}{2} - 1} K_{\alpha - \zeta} \left(2\sqrt{ \left|x\right|}\right) ,
\end{equation}

\subsection{CDF}
The CDF of the SKD can be derived directly from the PDF through 
\begin{align} \label{Eq:CDF_1}
	F\left(x\right) = \int_{-\infty}^{x} f\left(x\right)dx.
\end{align}
By using~\eqref{Eq:PDF_1} it can be rewritten as
\begin{align} \label{Eq:CDF_2}
	F\left(x\right) = \left\{
	\begin{array}{LL}
		\begin{array}{LL}
			\int_{\gamma}^{x} &\frac{\delta^{\frac{\alpha+\zeta}{2}}}{\Gamma(\alpha) \Gamma(\zeta)} \left(x-\gamma\right)^{\frac{\alpha+\zeta}{2}-1} K_{\alpha-\zeta}(2 \sqrt{\delta(x-\gamma)})dx \\
			&+ \int_{-\infty}^{\gamma} \frac{\delta^{\frac{\alpha+\zeta}{2}}}{\Gamma(\alpha) \Gamma(\zeta)} \left(\gamma-x\right)^{\frac{\alpha+\zeta}{2}-1} K_{\alpha-\zeta}(2 \sqrt{\delta(\gamma-x)})dx
		\end{array} &,\quad x \geq \gamma \\
		\int_{-\infty}^{x} \frac{\delta^{\frac{\alpha+\zeta}{2}}}{\Gamma(\alpha) \Gamma(\zeta)} \left(\gamma-x\right)^{\frac{\alpha+\zeta}{2}-1} K_{\alpha-\zeta}(2 \sqrt{\delta(\gamma-x)})dx &,\quad x \leq \gamma
	\end{array} 
	\right. .
\end{align}
From~\eqref{Eq:CDF_2}, we need to calculate three integrals, namely $I_4$, $I_5$, and $I_6$. $I_4$ can be transformed based on $y=x-\gamma>0$ into
\begin{align} \label{Eq:I_4_1}
	I_4 = \frac{\delta^{\frac{\alpha+\zeta}{2}}}{\Gamma(\alpha) \Gamma(\zeta)} \int_{0}^{x-\gamma} y^{\frac{\alpha+\zeta}{2}-1} K_{\alpha-\zeta}(2 \sqrt{\delta y})dy .
\end{align}
Next, the $K_v$ function can be written in terms of the Meijer-G function as in~\cite[eq. 8.4.23.1]{B:PrudnIII} and, thus,~\eqref{Eq:I_4_1} can be equivalently written as
\begin{align} \label{Eq:I_4_2}
	I_4 = \frac{\delta^{\frac{\alpha+\zeta}{2}}}{2 \Gamma(\alpha) \Gamma(\zeta)} \int_{0}^{x-\gamma} y^{\frac{\alpha+\zeta}{2}-1} G_{0,2}^{2,0} \left[\delta y \ \vline \ \begin{array}{cc}-\\ \frac{\alpha-\zeta}{2},-\frac{\alpha-\zeta}{2}\end{array}\right] dy .
\end{align}
After performing the integration in~\eqref{Eq:I_4_2} based on~\cite[eq. 26]{Adamchik1990}, it can be rewritten as
\begin{align}
	I_4 = \frac{\delta^{\frac{\alpha + \zeta}{2}}}{\Gamma(\alpha) \Gamma(\zeta)}(x-\gamma)^{\frac{\alpha + \zeta}{2}} \quad G_{1,3}^{2,1} \left[\delta (x-\gamma) \ \vline \ \begin{array}{cc}1-\frac{\alpha+\zeta}{2}\\ \frac{\alpha-\zeta}{2},-\frac{\alpha-\zeta}{2},-\frac{\alpha+\zeta}{2}\end{array}\right] .
\end{align}

Furthermore, after transforming the integral $I_5$ based on $z=\gamma-x<0$ can be written as
\begin{align} \label{Eq:I_5_1}
	I_5 = \frac{\delta^{\frac{\alpha+\zeta}{2}}}{\Gamma(\alpha) \Gamma(\zeta)} \int_{0}^{\infty} z^{\frac{\alpha+\zeta}{2}-1} K_{\alpha-\zeta}(2 \sqrt{\delta z}) dz ,
\end{align}
which, by expressing the $K_v$ function in terms of Meijer-G as in~\cite[eq. 8.4.23.1]{B:PrudnIII}, can be rewritten as
\begin{align} \label{Eq:I_5_2}
	I_5 = \frac{\delta^{\frac{\alpha+\zeta}{2}}}{2 \Gamma(\alpha) \Gamma(\zeta)} \int_{0}^{\infty} z^{\frac{\alpha+\zeta}{2}-1} G_{0,2}^{2,0} \left[\delta z \ \vline \ \begin{array}{cc}-\\ \frac{\alpha-\zeta}{2},-\frac{\alpha-\zeta}{2}\end{array}\right] dz ,
\end{align}
and, since $\arg \delta < \pi$ and $\delta \neq 0$, we can perform the integration based on~\cite[eq. 2.24.2.1]{B:PrudnIII}. Thus,~\eqref{Eq:I_5_2} can be equivalently written as
\begin{align} \label{Eq:I_5_3}
	\begin{split}
		I_5 = &\frac{\delta^{\frac{\alpha+\zeta}{2}}}{2 \Gamma(\alpha) \Gamma(\zeta)} \delta^{-\frac{\alpha+\zeta}{2}} \Gamma\left[\begin{array}{cc}\frac{\alpha-\zeta}{2}+\frac{\alpha+\zeta}{2},-\frac{\alpha-\zeta}{2}+\frac{\alpha+\zeta}{2}\\ -\end{array}\right] \\
		= & \frac{1}{2 \Gamma(\alpha) \Gamma(\zeta)} \Gamma\left[\frac{\alpha-\zeta}{2}+\frac{\alpha+\zeta}{2}\right] \Gamma\left[-\frac{\alpha-\zeta}{2}+\frac{\alpha+\zeta}{2}\right] = \frac{1}{2} 
	\end{split}
\end{align}

For the third integral, $I_6$, after applying the transformation $k=\gamma-x>0$ and writing the $K_v$ function in terms of Meijer-G as in~\cite[eq. 8.4.23.1]{B:PrudnIII}, it can be expressed as
\begin{align} \label{Eq:I_6_1}
	I_6 = \frac{\delta^{\frac{\alpha+\zeta}{2}}}{2 \Gamma(\alpha) \Gamma(\zeta)} \int_{\gamma-x}^{\infty} k^{\frac{\alpha+\zeta}{2}-1} G_{0,2}^{2,0} \left[\delta k \ \vline \ \begin{array}{cc}-\\ \frac{\alpha-\zeta}{2},-\frac{\alpha-\zeta}{2}\end{array}\right] dk ,
\end{align}
which can be rewritten as
\begin{align} \label{Eq:I_6_2}
	I_6 = I_5 - \frac{\delta^{\frac{\alpha+\zeta}{2}}}{2 \Gamma(\alpha) \Gamma(\zeta)} \int_{0}^{\gamma-x} k^{\frac{\alpha+\zeta}{2}-1} G_{0,2}^{2,0} \left[\delta k \ \vline \ \begin{array}{cc}-\\ \frac{\alpha-\zeta}{2},-\frac{\alpha-\zeta}{2}\end{array}\right] dk ,
\end{align}
and by performing the integration based on~\cite[eq. 26]{Adamchik1990},~\eqref{Eq:I_6_2} can be equivalently expressed as
\begin{align} \label{Eq:I_6_3}
	\frac{1}{2}-\frac{\delta^{\frac{\alpha + \zeta}{2}}}{2\Gamma(\alpha) \Gamma(\zeta)}(\gamma-x)^{\frac{\alpha + \zeta}{2}} \quad G_{1,3}^{2,1} \left[\delta (\gamma-x) \ \vline \ \begin{array}{cc}1-\frac{\alpha+\zeta}{2}\\ \frac{\alpha-\zeta}{2},-\frac{\alpha-\zeta}{2},-\frac{\alpha+\zeta}{2}\end{array}\right] .
\end{align}

Thus, the CDF of the SKD can be written as
\begin{align} \label{Eq:CDF_3}
	F\left(x\right) = \left\{
	\begin{array}{LL}
		\frac{1}{2}+\frac{\delta^{\frac{\alpha + \zeta}{2}}}{\Gamma(\alpha) \Gamma(\zeta)}(x-\gamma)^{\frac{\alpha + \zeta}{2}} \quad G_{1,3}^{2,1} \left[\delta (x-\gamma) \ \vline \ \begin{array}{cc}1-\frac{\alpha+\zeta}{2}\\ \frac{\alpha-\zeta}{2},-\frac{\alpha-\zeta}{2},-\frac{\alpha+\zeta}{2}\end{array}\right] &,\quad x \geq \gamma \\
		\frac{1}{2}-\frac{\delta^{\frac{\alpha + \zeta}{2}}}{2\Gamma(\alpha) \Gamma(\zeta)}(\gamma-x)^{\frac{\alpha + \zeta}{2}} \quad G_{1,3}^{2,1} \left[\delta (\gamma-x) \ \vline \ \begin{array}{cc}1-\frac{\alpha+\zeta}{2}\\ \frac{\alpha-\zeta}{2},-\frac{\alpha-\zeta}{2},-\frac{\alpha+\zeta}{2}\end{array}\right] &,\quad x \leq \gamma
	\end{array} 
	\right. .
\end{align}
and after some simplifications the CDF of the four-parameter SKD can be rewritten as
\begin{align} \label{Eq:CDF_final}
	F\left(x\right) = \frac{1}{2}+\frac{\mathrm{sgn}\left(x-\gamma\right) \delta^{\frac{\alpha + \zeta}{2}}}{2\Gamma(\alpha) \Gamma(\zeta)}|x-\gamma|^{\frac{\alpha + \zeta}{2}} \quad G_{1,3}^{2,1} \left(\delta |x-\gamma| \ \vline \ \begin{array}{cc}1-\frac{\alpha+\zeta}{2}\\ \frac{\alpha-\zeta}{2},-\frac{\alpha-\zeta}{2},-\frac{\alpha+\zeta}{2}\end{array}\right) .
\end{align}

Furthermore, when $\alpha \neq \zeta$, the Meijer-G function in \eqref{Eq:CDF_final} can be expressed in terms of the more familiar $_1\!F_2$ hypergeometric function. Also, since $\arg \delta \left|x-\gamma\right| < \pi$, the CDF can be written as~\cite[eq. 8.2.2.3]{B:PrudnIII}
\begin{align}
	\begin{split}
		F\left(x\right) = &\frac{1}{2}+\frac{\mathrm{sgn}\left(x-\gamma\right) \delta^{\frac{\alpha + \zeta}{2}}}{2\Gamma(\alpha) \Gamma(\zeta)}|x-\gamma|^{\frac{\alpha + \zeta}{2}} \pi \csc (\pi  (\zeta -\alpha )) \\
		&\left(\frac{(\delta  \left| x-\gamma \right| )^{\frac{\alpha-\zeta}{2}}}{\alpha \left(\alpha-\zeta\right) \Gamma(\alpha-\zeta)} \, _1\!F_2(\alpha ;\alpha -\zeta +1,\alpha +1;\delta  \left| x-\gamma \right| ) \right.\\
		&\left. -\frac{(\delta  \left| x-\gamma \right| )^{\frac{\zeta-\alpha}{2}}}{\zeta \left(\zeta-\alpha\right) \Gamma(\zeta-\alpha)}  \, _1\!F_2(\zeta ;-\alpha +\zeta +1,\zeta +1;\delta  \left| x-\gamma \right| )\right) 
	\end{split}
\end{align}
For $\gamma=0$ and $\delta=1$ the CDF of the standard form of the SKD is given by
\begin{align} \label{Eq:CDF_final_10}
	F\left(x\right) = \frac{1}{2}+\frac{\mathrm{sgn}\left(x\right) \delta^{\frac{\alpha + \zeta}{2}}}{2\Gamma(\alpha) \Gamma(\zeta)}|x|^{\frac{\alpha + \zeta}{2}} \quad G_{1,3}^{2,1} \left[ |x| \ \vline \ \begin{array}{cc}1-\frac{\alpha+\zeta}{2}\\ \frac{\alpha-\zeta}{2},-\frac{\alpha-\zeta}{2},-\frac{\alpha+\zeta}{2}\end{array}\right] .
\end{align}

\subsection{N-th moment}
The n-th Moment of the SKD can be derived as
\begin{align} \label{Eq:N_Moment_1}
	\mu_n = \int_{-\infty}^{\infty} x^n f\left(x\right)dx .
\end{align}
By substituting~\eqref{Eq:PDF_2} into~\eqref{Eq:N_Moment_1}, it can be rewritten as
\begin{align} \label{Eq:N_Moment_2}
	\begin{split}
		\mu_n = &\frac{\delta^{\frac{\alpha + \zeta}{2}}}{\Gamma\left(\alpha\right) \Gamma\left(\zeta\right)} \int_{-\infty}^{\gamma} x^n \left(\gamma-x\right)^{\frac{\alpha + \zeta}{2} - 1} K_{\alpha - \zeta} \left(2\sqrt{\delta \left(\gamma-x\right)}\right) dx \\
		&+ \frac{\delta^{\frac{\alpha + \zeta}{2}}}{\Gamma\left(\alpha\right) \Gamma\left(\zeta\right)} \int_{\gamma}^{\infty} x^n \left(x-\gamma\right)^{\frac{\alpha + \zeta}{2} - 1} K_{\alpha - \zeta} \left(2\sqrt{\delta \left(x-\gamma\right)}\right) dx ,
	\end{split} 
\end{align}
which, by using the transformations $y=x-\gamma>0$ and $k=\gamma-x>0$, it can be equivalently written as
\begin{align} \label{Eq:N_Moment_3}
	\begin{split}
		\mu_n = &\frac{\delta^{\frac{\alpha + \zeta}{2}}}{\Gamma\left(\alpha\right) \Gamma\left(\zeta\right)} \int_{0}^{\infty} \left(\gamma-k\right)^n k^{\frac{\alpha + \zeta}{2} - 1} K_{\alpha - \zeta} \left(2\sqrt{\delta k}\right)  dk \\
		&+ \frac{\delta^{\frac{\alpha + \zeta}{2}}}{\Gamma\left(\alpha\right) \Gamma\left(\zeta\right)} \int_{0}^{\infty} \left(\gamma+y\right)^n y^{\frac{\alpha + \zeta}{2} - 1} K_{\alpha - \zeta} \left(2\sqrt{\delta y}\right) dy .
	\end{split} 
\end{align}
Next, by using~\cite[eq. 1.111]{Gradshteyn2014},~\eqref{Eq:N_Moment_3} can be rewritten as
\begin{align} \label{Eq:N_Moment_4}
	\begin{split}
		\mu_n = &\frac{\delta^{\frac{\alpha + \zeta}{2}}}{\Gamma\left(\alpha\right) \Gamma\left(\zeta\right)} \sum_{i=0}^{n} \left(\begin{array}{c}n\\ i\end{array}\right) \gamma^i \left(-1\right)^{n-i} \int_{0}^{\infty} k^{\frac{\alpha + \zeta}{2} - 1 + n - i} K_{\alpha - \zeta} \left(2\sqrt{\delta k}\right)  dk \\
		&+ \frac{\delta^{\frac{\alpha + \zeta}{2}}}{\Gamma\left(\alpha\right) \Gamma\left(\zeta\right)} \sum_{i=0}^{n} \left(\begin{array}{c}n\\ i\end{array}\right) \gamma^i \int_{0}^{\infty} y^{\frac{\alpha + \zeta}{2} - 1 + n - i} K_{\alpha - \zeta} \left(2\sqrt{\delta y}\right) dy .
	\end{split} 		
\end{align}
Furthermore, after using the substitution $\eta = \sqrt{y} = \sqrt{k}$,~\eqref{Eq:N_Moment_4} can be expressed as
\begin{align} \label{Eq:N_Moment_5}
	\mu_n = \frac{2\delta^{\frac{\alpha + \zeta}{2}}}{\Gamma\left(\alpha\right) \Gamma\left(\zeta\right)} \sum_{i=0}^{n} \left(\begin{array}{c}n\\ i\end{array}\right) \gamma^i \left(\left(-1\right)^{n-i} + 1\right) \int_{0}^{\infty} \eta^{\left(\alpha + \zeta + 2n - 2i\right) - 1} K_{\alpha - \zeta} \left(2\sqrt{\delta} \eta\right)  d\eta ,		
\end{align}
and, since $\left(\alpha + \zeta + 2n - 2i\right), \frac{1}{2\sqrt{\delta}}>0$, by employing~\cite[eq. 2.16.2.2]{B:PrudnIII}, \eqref{Eq:N_Moment_5} can be rewritten as
\begin{align} \label{Eq:N_Moment_6}
	\mu_n = \frac{\delta^{i-n}}{2\Gamma\left(\alpha\right) \Gamma\left(\zeta\right)} \sum_{i=0}^{n} \left(\begin{array}{c}n\\ i\end{array}\right) \gamma^i \left(\left(-1\right)^{n-i} + 1\right) \Gamma\left(\alpha + n - i\right) \Gamma\left(\zeta + n - i\right) .		
\end{align}
Next, the previous equation can be simplified as
\begin{align} \label{Eq:N_Moment_7}
	\mu_n =\sum_{k=0}^{n} \frac{\Gamma(n+1) \Gamma(\alpha+n-k) \Gamma(\zeta+n-k)}{\Gamma(k+1) \Gamma(n-k+1) \Gamma(\alpha) \Gamma(\zeta)} \frac{\gamma^{k}\left((-1)^{n-k}+1\right)}{2 \delta^{n-k}} .
\end{align}
Finally, by using basic transformation of the $\Gamma$ function, the n-th moment of the four-parameter SKD can be expressed as
\begin{align} \label{Eq:N_Moment_final}
	\mu_n =\sum_{\substack{k=0 \\ n-k\in\text{even}}}^{n} \frac{n \gamma^{k} \left(\alpha\right)_{n-k} \left(\zeta\right)_{n-k}}{\delta^{n-k} \Gamma(k+1) \left(n\right)_{1-k}} ,
\end{align}
with $\left(x\right)_{n}$ denoting the Pochhammer symbol.

Furthermore, the first four moments and the first four central moments are presented in Table~\ref{Tbl:Moments}.
\begin{table} [htbp]
	\centering
	\caption{Moments and central moments of the SKD.}
	\renewcommand{\arraystretch}{1.5}
	\begin{tabular}{|c|c|} 
		\hline
		\textbf{Moment} & \textbf{Value} \\
		\hline
		$\mu_1$ & $\gamma$ \\
		\hline
		$\mu_2$ & $\frac{\alpha  (\alpha +1) \zeta  (\zeta +1)}{\delta ^2}+\gamma ^2$ \\
		\hline
		$\mu_3$ & $\frac{3 \alpha  (\alpha +1) \gamma  \zeta  (\zeta +1)}{\delta ^2}+\gamma ^3$ \\
		\hline
		$\mu_4$ & $\frac{6 \alpha  (\alpha +1) \gamma ^2 \zeta  (\zeta +1)}{\delta ^2}+\frac{\alpha  (\alpha +1) (\alpha +2) (\alpha +3) \zeta  (\zeta +1) (\zeta +2) (\zeta +3)}{\delta ^4}+\gamma ^4$ \\
		\hline
		\hline
		\textbf{Central moment} & \textbf{Value} \\
		\hline
		$\tilde{\mu}_1 = \mu_1$  & $\gamma$ \\
		\hline
		$\tilde{\mu}_2 = \mu_2 - \mu_1^2$ & $\frac{\alpha  (\alpha +1) \zeta  (\zeta +1)}{\delta ^2}$ \\
		\hline
		$\tilde{\mu}_3 = \mu_3 - 3 \mu_1 \mu_2 + 2 \mu_1^3$ & $0$ \\
		\hline
		$\tilde{\mu}_4 = \mu_4 - 4 \mu_1 \mu_3 + 6 \mu_1^2 \mu_2 - 3 \mu_1^4$ & $\frac{\alpha  (\alpha +1) (\alpha +2) (\alpha +3) \zeta  (\zeta +1) (\zeta +2) (\zeta +3)}{\delta ^4}$ \\
		\hline
	\end{tabular}
	\label{Tbl:Moments}
\end{table}

\subsection{Cumulants}
In addition, the cumulants of the SKD are given by
\begin{align}
	K\left(t\right) = \log\left(\mathrm{E}\left[e^{t x}\right]\right) ,
\end{align}
or, in terms of the N-th moments
\begin{align}
	\kappa_n = \mu_n - \sum_{m=1}^{n-1} \left(\begin{array}{c}n-1 \\ m\end{array}\right) \kappa_m \mu_{n-m} ,
\end{align}
which, after substituting~\eqref{Eq:N_Moment_final}, can be written as
\begin{align}
	\begin{split}
		\kappa_n = \sum_{\substack{k=0 \\ n-k\in\text{even}}}^{n} &\frac{n \left(\alpha\right)_{n-k} \left(\zeta\right)_{n-k} \left(1-n\right)_{k}}{k \left(n-k\right) \left(-1\right)^{k} \Gamma\left(k\right)} - \sum_{m=1}^{n-1} \!\!\!\!\!\! \sum_{\substack{k=0 \\ n-m-k\in\text{even}}}^{n-m}\!\!\!\!\!\!  \left(\begin{array}{c}n-1 \\ m\end{array}\right) \kappa_m \\
		&\times \frac{\left(n-m\right) \left(\alpha\right)_{n-m-k} \left(\zeta\right)_{n-m-k} \left(1-n-m\right)_{k}}{k \left(n-m-k\right) \left(-1\right)^{k} \Gamma\left(k\right)} .
	\end{split}
\end{align}
Thus, the first four cumulants are presented in Table~\ref{Tbl:Cumulants}.
\begin{table} [htbp]
	\centering
	\caption{Cumulants of the SKD.}
	\renewcommand{\arraystretch}{1.5}
	\begin{tabular}{|c|c|} 
		\hline
		\textbf{Cumulant} & \textbf{Value} \\
		\hline
		$\kappa_1 = \tilde{\mu}_1$ & $\gamma$ \\
		\hline
		$\kappa_2 = \tilde{\mu}_2$ & $\frac{\alpha  (\alpha +1) \zeta  (\zeta +1)}{\delta ^2}$ \\
		\hline
		$\kappa_3 = \tilde{\mu}_3$ & $0$ \\
		\hline
		$\kappa_4 = \tilde{\mu}_4 - 3 \tilde{\mu}_2^2$ & $\frac{(\alpha +2) (\alpha +3) (\zeta +2) (\zeta +3)}{\alpha  (\alpha +1) \zeta  (\zeta +1)} \left(\alpha ^2 ((\zeta -1) \zeta -3) - \alpha (\zeta  (\zeta +11)+15)-3 (\zeta +2) (\zeta +3)\right)$ \\
		\hline
	\end{tabular}
	\label{Tbl:Cumulants}
\end{table}

Finally, based on the aforementioned analysis, the mean, variance, skewness, and kurtosis of the SKD are given in
\begin{align}
	\mu = &\mu_1 = \gamma ,\\
	\sigma^2 = &\tilde{\mu}_2 = \frac{\alpha  (\alpha +1) \zeta  (\zeta +1)}{\delta ^2} ,\\
	\gamma_1 = &\frac{\tilde{\mu}_3}{\tilde{\mu}_2^{3/2}} = 0 , \text{ and}\\
	\beta_2 = &\frac{\tilde{\mu}_4}{\tilde{\mu}_2^2} = \frac{(\alpha +2) (\alpha +3) (\zeta +2) (\zeta +3)}{\alpha  (\alpha +1) \zeta  (\zeta +1)} .
\end{align}

\subsection{Order Statistics}
Order statistics are among the most fundamental tools in non-parametric statistics and inference. The order statistics $X_{(1)},X_{(2)},\cdots,X_{(n)}$,  of any random sample of random variables, $X_1,X_2,\dots,X_n$, are the same random variables sorted in increasing order. Assuming that any such random sample follows the SKD, the CDF of that random sample is given by
\begin{align}
	F_{\mathrm{X}_{\left(r\right)}}\left(x\right) = \sum_{j-r}^{n} \left(\begin{array}{c}n \\ j\end{array}\right) \left[F\left(x\right)\right]^{j} \left[1-F\left(x\right)\right]^{n-j} ,
\end{align}
and, after substituting~\eqref{Eq:CDF_final}, it can be equivalently written as
\begin{align}
	F_{\mathrm{X}_{\left(r\right)}}\left(x\right) = \sum_{j-r}^{n} \left(\begin{array}{c}n \\ j\end{array}\right) \left[\frac{1}{2}+\Lambda\left(x\right)\right]^{j} \left[\frac{1}{2}-\Lambda\left(x\right)\right]^{n-j} ,
\end{align}
with
\begin{align}
	\Lambda\left(x\right) = \frac{\mathrm{sgn}\left(x-\gamma\right) \delta^{\frac{\alpha + \zeta}{2}}}{\Gamma(\alpha) \Gamma(\zeta)}|x-\gamma|^{\frac{\alpha + \zeta}{2}} \quad G_{1,3}^{2,1} \left[\delta |x-\gamma| \ \vline \ \begin{array}{cc}1-\frac{\alpha+\zeta}{2}\\ \frac{\alpha-\zeta}{2},-\frac{\alpha-\zeta}{2},-\frac{\alpha+\zeta}{2}\end{array}\right] .
\end{align}

Furthermore, the PDF of a random sample can be expressed as
\begin{align}
	f_{\mathrm{X}_{\left(r\right)}}\left(x\right) = \frac{n!}{\left(r-1\right)! \left(n-r\right)!} f\left(x\right) \left[F\left(x\right)\right]^{r-1} \left[1-F\left(x\right)\right]^{n-r} ,
\end{align}
and, after substituting~\eqref{Eq:PDF_2} and~\eqref{Eq:CDF_final}, it can be rewritten as
\begin{align}
	\begin{split}
		f_{\mathrm{X}_{\left(r\right)}}\left(x\right) =	&\frac{n!}{ \left(r-1\right)! \left(n-r\right)!} \frac{\mathrm{sgn}\left(x-\gamma\right) \delta^{\frac{\alpha + \zeta}{2}}}{\Gamma\left(\alpha\right) \Gamma\left(\zeta\right)} |x-\gamma|^{\frac{\alpha + \zeta}{2} - 1} \\
		&\times K_{\alpha - \zeta} \left(2\sqrt{\delta |x-\gamma|}\right) \left[\frac{1}{2}+\Lambda\left(x\right)\right]^{r-1} \left[\frac{1}{2}-\Lambda\left(x\right)\right]^{n-r}
	\end{split} .
\end{align}

\subsection{Distribution of the product and ratio of two SKD RVs}
In this section we derive the PDF and CDF of the product and ratio distributions of the SKD. These two types of distributions are extensively applied in machine learning and Bayesian analysis problems, such as posterior distribution and density estimation.

\subsubsection{Product PDF}
\begin{theorem} \label{Th:Product_PDF}
	The PDF of the product of two zero mean iid variables that follow the SKD is given by
	\begin{align} \label{Eq:product_pdf_final}
		f_\mathrm{Z}\left(z\right) = \frac{\delta^{\alpha+\zeta} \left|z\right|^{\frac{\alpha+\zeta}{2}-1}}{2\left[\Gamma\left(\alpha\right)\Gamma\left(\zeta\right)\right]^2} G_{0,4}^{4,0} \left(\delta \left|z\right|\ \vline \ \begin{array}{cc}-\\ \frac{\alpha-\zeta}{2},-\frac{\alpha-\zeta}{2},\frac{\alpha-\zeta}{2},-\frac{\alpha-\zeta}{2}\end{array}\right) .
	\end{align}
\end{theorem}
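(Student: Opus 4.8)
The plan is to use the zero-mean hypothesis, which by the first-moment formula $\mu_1=\gamma$ forces $\gamma=0$, so that each factor has the symmetric density $f(x)=\frac{\delta^{(\alpha+\zeta)/2}}{\Gamma(\alpha)\Gamma(\zeta)}|x|^{\frac{\alpha+\zeta}{2}-1}K_{\alpha-\zeta}(2\sqrt{\delta|x|})$ from \eqref{Eq:PDF_2}. Writing $Z=XY$ with $X,Y$ iid, I would start from the standard product-density formula $f_Z(z)=\int_{-\infty}^{\infty} f(x)\,f(z/x)\,|x|^{-1}\,dx$. Since $f$ is even, the integrand is invariant under $x\mapsto -x$, so the contributions from $x<0$ and $x>0$ coincide, the problem collapses to $f_Z(z)=2\int_0^{\infty} f(x)\,f(z/x)\,x^{-1}\,dx$ for $z>0$, and $f_Z$ is even in $z$; this already accounts for the leading factor $2$ and the $|z|$-dependence in \eqref{Eq:product_pdf_final}.

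Substituting the explicit $f$ factors out a prefactor $2\,\delta^{\alpha+\zeta}[\Gamma(\alpha)\Gamma(\zeta)]^{-2}\,|z|^{\frac{\alpha+\zeta}{2}-1}$ and leaves the core integral $\int_0^{\infty} x^{-1}K_{\alpha-\zeta}(2\sqrt{\delta x})\,K_{\alpha-\zeta}(2\sqrt{\delta z/x})\,dx$. The decisive step is to rewrite each Bessel factor as a Meijer-G function through \cite[eq. 8.4.23.1]{B:PrudnIII}, turning the core into a Mellin-type convolution $\int_0^{\infty} x^{-1} G_{0,2}^{2,0}(\delta x\mid\cdots)\,G_{0,2}^{2,0}(\delta z/x\mid\cdots)\,dx$ of two $G_{0,2}^{2,0}$ functions whose lower parameters are $\tfrac{\alpha-\zeta}{2},-\tfrac{\alpha-\zeta}{2}$. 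Evaluating this product integral by the two-$G$ convolution formula \cite[eq. 2.24.1.1]{B:PrudnIII} merges the two second-order functions into a single $G_{0,4}^{4,0}$ whose four lower parameters are exactly $\tfrac{\alpha-\zeta}{2},-\tfrac{\alpha-\zeta}{2},\tfrac{\alpha-\zeta}{2},-\tfrac{\alpha-\zeta}{2}$, with argument assembled from the product of the two Bessel scales $\delta x\cdot\delta z/x=\delta^2 z$. Collecting the prefactor then yields \eqref{Eq:product_pdf_final}.

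As an independent check that I would run in parallel, the multiplicativity of the Mellin transform gives a quick route: since $|Z|=|X|\,|Y|$ is a product of independent positive variables, $\mathcal{M}_{|Z|}(s)=\mathcal{M}_{|X|}(s)^2$, and a short computation of exactly the type already performed for the moments in \eqref{Eq:N_Moment_5}--\eqref{Eq:N_Moment_6} gives $\mathcal{M}_{|X|}(s)=\delta^{1-s}\Gamma(s+\alpha-1)\Gamma(s+\zeta-1)/[\Gamma(\alpha)\Gamma(\zeta)]$. Squaring produces a product of four Gamma factors, and inverting the resulting Mellin--Barnes integral reproduces the $G_{0,4}^{4,0}$ density; a Meijer-G argument-shift identity then recasts the naturally occurring lower parameters $\alpha-1,\zeta-1$ into the symmetric pair $\pm\tfrac{\alpha-\zeta}{2}$. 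A single moment, e.g.\ $\mathrm{E}[Z^2]=(\mathrm{E}[X^2])^2=[\alpha(\alpha+1)\zeta(\zeta+1)]^2/\delta^4$, recomputed from the closed-form $G_{0,4}^{4,0}$ density, pins down all constants.

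The main obstacle I anticipate is the correct application of \cite[eq. 2.24.1.1]{B:PrudnIII}: one must match the empty upper and two-element lower parameter lists of each $G_{0,2}^{2,0}$ to the template of the formula, carry the $x^{-1}$ weight and the two scale factors $\delta$ through the reduction, and verify the accompanying contour and convergence conditions, where $\alpha,\zeta>0$ guarantee that the Gamma products are well defined and that the Mellin--Barnes contour separates the poles. Keeping the $\delta$-powers consistent across the substitution $\delta x\cdot\delta z/x$ is the bookkeeping point most likely to introduce a slip, so I would cross-validate the final scale and prefactor against the moment computation above before declaring the identity.
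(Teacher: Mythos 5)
Your main derivation takes the same route as the paper's own proof: set $\gamma=0$ from the zero-mean assumption, start from $f_\mathrm{Z}(z)=\int f(x)f(z/x)|x|^{-1}dx$, fold the integral onto $(0,\infty)$ by evenness, replace each $K_{\alpha-\zeta}$ by a $G_{0,2}^{2,0}$, and collapse the resulting Mellin convolution into a single $G_{0,4}^{4,0}$; your Mellin-transform argument is a genuinely independent and cleaner second route, but you present it only as a check. One technical remark on the first route: \cite[eq. 2.24.1.1]{B:PrudnIII} is stated for two $G$-functions whose arguments are both proportional to $x$, so to apply it to the factor with argument $\delta|z|/x$ you must first invert that argument via \cite[eq. 8.2.2.14]{B:PrudnIII}, exactly as the paper does to obtain \eqref{Eq:product_pdf_7}; your write-up skips this step.

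The substantive issue is the $\delta$-bookkeeping that you yourself flagged as the likely place for a slip. Your computation gives the argument $\delta x\cdot\delta|z|/x=\delta^{2}|z|$, and that is in fact the correct result:
\begin{equation*}
	f_\mathrm{Z}\left(z\right) = \frac{\delta^{\alpha+\zeta} \left|z\right|^{\frac{\alpha+\zeta}{2}-1}}{2\left[\Gamma\left(\alpha\right)\Gamma\left(\zeta\right)\right]^2} G_{0,4}^{4,0} \left(\delta^{2} \left|z\right|\ \vline \ \begin{array}{c}-\\ \frac{\alpha-\zeta}{2},-\frac{\alpha-\zeta}{2},\frac{\alpha-\zeta}{2},-\frac{\alpha-\zeta}{2}\end{array}\right) .
\end{equation*}
But this is \emph{not} \eqref{Eq:product_pdf_final}, whose argument is $\delta|z|$; the two coincide only when $\delta=1$. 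So your closing claim that collecting the prefactor ``yields \eqref{Eq:product_pdf_final}'' contradicts your own (correct) intermediate result, and the cross-validation you proposed but never executed would have exposed this: your transform $\mathcal{M}_{|Z|}(s)=\delta^{2-2s}\left[\Gamma(s+\alpha-1)\Gamma(s+\zeta-1)\right]^{2}/\left[\Gamma(\alpha)\Gamma(\zeta)\right]^{2}$ carries the scale as $(\delta^{2})^{1-s}$, which forces the argument $\delta^{2}|z|$. Equivalently, the density as printed in \eqref{Eq:product_pdf_final} integrates to $\delta^{(\alpha+\zeta)/2}\neq 1$ and gives $\mathrm{E}\left[Z^{2}\right]=\delta^{\frac{\alpha+\zeta}{2}-2}\left[\alpha(\alpha+1)\zeta(\zeta+1)\right]^{2}$ instead of $\left(\mathrm{E}\left[X^{2}\right]\right)^{2}=\left[\alpha(\alpha+1)\zeta(\zeta+1)\right]^{2}/\delta^{4}$. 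The slip is the paper's, not yours: in \eqref{Eq:product_pdf_8} the convolution of $G_{0,2}^{2,0}(\delta x\mid\cdot)$ with $G_{2,0}^{0,2}(x/(\delta|z|)\mid\cdot)$ must produce the ratio of the two scales, $1/(\delta^{2}|z|)$, not $1/(\delta|z|)$. Your method is sound and essentially identical to the paper's, but your conclusion should be the corrected display above rather than a forced agreement with the misprinted theorem.
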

\begin{proof}
	The pdf of the product of two iid variables that follow the symmetric can be expressed as
	\begin{align} \label{Eq:product_pdf_1}
		f_\mathrm{Z}\left(z\right) = \int_{-\infty}^{\infty} f_\mathrm{X}\left(x\right) f_\mathrm{Y}\left(\frac{z}{x}\right) \frac{1}{\left|x\right|} dx ,
	\end{align}
	where $z=xy$. Next, after substituting~\eqref{Eq:PDF_2} into~\eqref{Eq:product_pdf_1}, the latter can be rewritten as
	\begin{align} \label{Eq:product_pdf_2}
		\begin{split}
			f_\mathrm{Z}\left(z\right) = \frac{\delta^{\alpha+\zeta}}{\left[\Gamma\left(\alpha\right)\Gamma\left(\zeta\right)\right]^2} &\int_{-\infty}^{\infty} \frac{\left(\left|x-\gamma\right|\left|\frac{z}{x}-\gamma\right|\right)^{\frac{\alpha+\zeta}{2}-1}}{\left|x\right|} \\
			& \times K_{\alpha-\zeta}\left(2\sqrt{\delta\left|x-\gamma\right|}\right) K_{\alpha-\zeta}\left(2\sqrt{\delta\left|\frac{z}{x}-\gamma\right|}\right) dx ,
		\end{split}
	\end{align}
	Furthermore, by expressing the $K_v$ function in terms of the Meijer-G function as in~\cite[eq. 8.4.23.1]{B:PrudnIII}, the previous equation can be equivalently written as
	\begin{align} \label{Eq:product_pdf_3}
		\begin{split}
			f_\mathrm{Z}\left(z\right) = &\frac{\delta^{\alpha+\zeta}}{\left[2\Gamma\left(\alpha\right)\Gamma\left(\zeta\right)\right]^2} \int_{-\infty}^{\infty} \frac{\left(\left|x-\gamma\right|\left|\frac{z}{x}-\gamma\right|\right)^{\frac{\alpha+\zeta}{2}-1}}{\left|x\right|} \\
			& \times G_{0,2}^{2,0} \left[\delta \left|x-\gamma\right| \ \vline \ \begin{array}{cc}-\\ \frac{\alpha-\zeta}{2},-\frac{\alpha-\zeta}{2}\end{array}\right] G_{0,2}^{2,0} \left[\delta \left|\frac{z}{x}-\gamma\right| \ \vline \ \begin{array}{cc}-\\ \frac{\alpha-\zeta}{2},-\frac{\alpha-\zeta}{2}\end{array}\right] dx .
		\end{split}
	\end{align}
	
	It becomes evident that the evaluation of a closed form expression for the integral presented in~\eqref{Eq:product_pdf_3} is very challenging. However, if we assume zero mean symmetric K iid variables,~\eqref{Eq:product_pdf_3} can be expressed as
	\begin{align} \label{Eq:product_pdf_4}
		\begin{split}
			f_\mathrm{Z}\left(z\right) = &\frac{\delta^{\alpha+\zeta} \left|z\right|^{\frac{\alpha+\zeta}{2}-1}}{\left[2\Gamma\left(\alpha\right)\Gamma\left(\zeta\right)\right]^2} \int_{-\infty}^{\infty} \frac{1}{\left|x\right|}  G_{0,2}^{2,0} \left[\delta \left|x\right| \ \vline \ \begin{array}{cc}-\\ \frac{\alpha-\zeta}{2},-\frac{\alpha-\zeta}{2}\end{array}\right] G_{0,2}^{2,0} \left[\frac{\delta \left|z\right|}{\left|x\right|} \ \vline \ \begin{array}{cc}-\\ \frac{\alpha-\zeta}{2},-\frac{\alpha-\zeta}{2}\end{array}\right] dx ,
		\end{split}
	\end{align}
	or, equivalently
	\begin{align} \label{Eq:product_pdf_5}
		\begin{split}
			f_\mathrm{Z}\left(z\right) = \frac{\delta^{\alpha+\zeta} \left|z\right|^{\frac{\alpha+\zeta}{2}-1}}{\left[2\Gamma\left(\alpha\right)\Gamma\left(\zeta\right)\right]^2} &\left[\int_{0}^{\infty} \frac{1}{x}  G_{0,2}^{2,0} \left(\delta x \ \vline \ \begin{array}{cc}-\\ \frac{\alpha-\zeta}{2},-\frac{\alpha-\zeta}{2}\end{array}\right) G_{0,2}^{2,0} \left(\frac{\delta \left|z\right|}{x} \ \vline \ \begin{array}{cc}-\\ \frac{\alpha-\zeta}{2},-\frac{\alpha-\zeta}{2}\end{array}\right) dx \right. \\
			&- \left. \int_{-\infty}^{0} \frac{1}{x}  G_{0,2}^{2,0} \left(-\delta x \ \vline \ \begin{array}{cc}-\\ \frac{\alpha-\zeta}{2},-\frac{\alpha-\zeta}{2}\end{array}\right) G_{0,2}^{2,0} \left(- \frac{\delta \left|z\right|}{x} \ \vline \ \begin{array}{cc}-\\ \frac{\alpha-\zeta}{2},-\frac{\alpha-\zeta}{2}\end{array}\right) dx \right] ,
		\end{split}
	\end{align}
	and, after using the substitution $x=-x$ on the second integral,~\eqref{Eq:product_pdf_5} can be rewritten as
	\begin{align} \label{Eq:product_pdf_6}		
		f_\mathrm{Z}\left(z\right) = \frac{\delta^{\alpha+\zeta} \left|z\right|^{\frac{\alpha+\zeta}{2}-1}}{2\left[\Gamma\left(\alpha\right)\Gamma\left(\zeta\right)\right]^2} \int_{0}^{\infty} \frac{1}{x}  G_{0,2}^{2,0} \left(\delta x \ \vline \ \begin{array}{cc}-\\ \frac{\alpha-\zeta}{2},-\frac{\alpha-\zeta}{2}\end{array}\right) G_{0,2}^{2,0} \left(\frac{\delta \left|z\right|}{x} \ \vline \ \begin{array}{cc}-\\ \frac{\alpha-\zeta}{2},-\frac{\alpha-\zeta}{2}\end{array}\right) dx .
	\end{align}
	Next, by using~\cite[eq. 8.2.2.14]{B:PrudnIII}, the previous equation can be expressed as
	\begin{align} \label{Eq:product_pdf_7}
		f_\mathrm{Z}\left(z\right) = \frac{\delta^{\alpha+\zeta} \left|z\right|^{\frac{\alpha+\zeta}{2}-1}}{2\left[\Gamma\left(\alpha\right)\Gamma\left(\zeta\right)\right]^2} \int_{0}^{\infty} \frac{1}{x}  G_{0,2}^{2,0} \left(\delta x \ \vline \ \begin{array}{cc}-\\ \frac{\alpha-\zeta}{2},-\frac{\alpha-\zeta}{2}\end{array}\right) G_{2,0}^{0,2} \left(\frac{x}{\delta \left|z\right|} \ \vline \ \begin{array}{cc}1-\frac{\alpha-\zeta}{2},1+\frac{\alpha-\zeta}{2}\\ -\end{array}\right) dx .
	\end{align}
	Moreover, by performing the integration based on~\cite[eq. 2.24.1.3]{B:PrudnIII}, the previous equation can be rewritten as
	\begin{align} \label{Eq:product_pdf_8}
		f_\mathrm{Z}\left(z\right) = \frac{\delta^{\alpha+\zeta} \left|z\right|^{\frac{\alpha+\zeta}{2}-1}}{2\left[\Gamma\left(\alpha\right)\Gamma\left(\zeta\right)\right]^2} G_{4,0}^{0,4} \left(\frac{1}{\delta \left|z\right|} \ \vline \ \begin{array}{cc}1-\frac{\alpha-\zeta}{2},1+\frac{\alpha-\zeta}{2},1-\frac{\alpha-\zeta}{2},1+\frac{\alpha-\zeta}{2}\\ -\end{array}\right) .
	\end{align}
	Finally, after inverting the argument in~\eqref{Eq:product_pdf_8}, the PDF of the product distribution is presented in~\eqref{Eq:product_pdf_final}. This concludes the proof.
\end{proof}

\subsubsection{Product CDF}
\begin{theorem} \label{Th:Product_CDF}
	The CDF of the product of two zero mean iid variables that follow the SKD is given by
	\begin{align} \label{Eq:product_cdf_final}
		\begin{split}
			F_\mathrm{Z}\left(z\right) = &\frac{\delta^{\alpha+\zeta}}{2\left[\Gamma\left(\alpha\right)\Gamma\left(\zeta\right)\right]^2} \left[\delta^{\frac{\alpha+\zeta}{2}} \left[\Gamma\left(\alpha\right)\Gamma\left(\zeta\right)\right]^2 \right. \\ 
			&\left. + \mathrm{sgn}\left(z\right) \left|z\right|^{\frac{\alpha+\zeta}{2}} G_{1,5}^{4,1} \left(\delta \left|z\right|\ \vline \ \begin{array}{cc} 1-\frac{\alpha+\zeta}{2} \\ \frac{\alpha-\zeta}{2}, -\frac{\alpha-\zeta}{2}, \frac{\alpha-\zeta}{2}, -\frac{\alpha-\zeta}{2}, -\frac{\alpha+\zeta}{2} \end{array}\right) \right] .
		\end{split}
	\end{align}
\end{theorem}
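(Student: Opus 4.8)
The plan is to obtain $F_\mathrm{Z}$ by integrating the product density from Theorem~\ref{Th:Product_PDF}, exploiting the fact that $f_\mathrm{Z}$ is an even function of $z$ (it depends on $z$ only through $|z|$). Starting from $F_\mathrm{Z}(z)=\int_{-\infty}^{z} f_\mathrm{Z}(t)\,dt$ and substituting~\eqref{Eq:product_pdf_final}, I would first split the real line at the origin. For $z\geq 0$ this gives $F_\mathrm{Z}(z)=\int_{-\infty}^{0} f_\mathrm{Z}(t)\,dt+\int_{0}^{z} f_\mathrm{Z}(t)\,dt$, while for $z\leq 0$ only a single integral over $(-\infty,z)$ survives; the two branches will be merged into one expression through the factor $\mathrm{sgn}(z)$, exactly as was done for the SKD CDF in passing from~\eqref{Eq:CDF_3} to~\eqref{Eq:CDF_final}.

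The constant contribution $\int_{-\infty}^{0} f_\mathrm{Z}(t)\,dt$ equals one half of the total mass by symmetry, and I would evaluate it as the Mellin-type integral $\int_{0}^{\infty} t^{\frac{\alpha+\zeta}{2}-1}\,G_{0,4}^{4,0}(\delta t\mid\cdots)\,dt$ using~\cite[eq. 2.24.2.1]{B:PrudnIII}, precisely as in the computation of $I_5$ in~\eqref{Eq:I_5_3}. The four lower Meijer-G parameters $\pm\frac{\alpha-\zeta}{2}$, each shifted by $\frac{\alpha+\zeta}{2}$, collapse pairwise to $\Gamma(\alpha)^2\Gamma(\zeta)^2=[\Gamma(\alpha)\Gamma(\zeta)]^2$, which furnishes the Gamma-function constant sitting in the first line of~\eqref{Eq:product_cdf_final}.

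For the genuinely $z$-dependent part I would treat the finite integral $\int_{0}^{|z|} t^{\frac{\alpha+\zeta}{2}-1}\,G_{0,4}^{4,0}(\delta t\mid\cdots)\,dt$ by the finite-range integration formula~\cite[eq. 26]{Adamchik1990}, which raises the order of the Meijer-G by appending an upper parameter $1-\frac{\alpha+\zeta}{2}$ and a lower parameter $-\frac{\alpha+\zeta}{2}$ while pulling out a factor $|z|^{\frac{\alpha+\zeta}{2}}$. This is the exact higher-order analogue of the step that carried $G_{0,2}^{2,0}$ to $G_{1,3}^{2,1}$ in the evaluation of $I_4$ between~\eqref{Eq:I_4_2} and its successor, and here it converts $G_{0,4}^{4,0}$ into the $G_{1,5}^{4,1}$ of~\eqref{Eq:product_cdf_final}. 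Collecting the prefactor $\delta^{\alpha+\zeta}/\{2[\Gamma(\alpha)\Gamma(\zeta)]^2\}$, reinstating $\mathrm{sgn}(z)$ to absorb the $z<0$ branch, and adding the constant term then assembles~\eqref{Eq:product_cdf_final}.

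I expect the main obstacle to be the bookkeeping in the finite-range Meijer-G integration: verifying that~\cite[eq. 26]{Adamchik1990} applies cleanly to the fourth-order $G_{0,4}^{4,0}$ whose parameters $\frac{\alpha-\zeta}{2},-\frac{\alpha-\zeta}{2}$ are each repeated twice, and confirming that the two newly introduced parameters land in exactly the positions claimed, so that the result is the specifically indexed $G_{1,5}^{4,1}$ rather than a differently ordered Meijer-G. A secondary care point is ensuring that the symmetric splitting at the origin reproduces the single $\mathrm{sgn}(z)$-weighted form without leftover boundary terms.
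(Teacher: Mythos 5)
Your proposal follows essentially the same route as the paper: split the integral of the product PDF at the origin, evaluate the infinite-range piece by the Mellin-transform formula for $G_{0,4}^{4,0}$ (the paper cites Adamchik eq.~24, equivalent to the Prudnikov 2.24.2.1 step you reference, exactly as in the $I_5$ computation), evaluate the finite-range piece by Adamchik eq.~26 to produce the $G_{1,5}^{4,1}$ term, and merge the $z>0$ and $z<0$ branches via $\mathrm{sgn}(z)$. The parameter bookkeeping you flag as a care point works out just as you anticipate, so the argument is correct and matches the paper's proof.
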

\begin{proof}
	The CDF of the product of two RVs that follow the standard SKD with identical parameters is given by
	\begin{align} \label{Eq:product_cdf_1}
		F_\mathrm{Z}\left(z\right) = \int_{-\infty}^{z} f_\mathrm{Z}\left(x\right)dx ,
	\end{align}
	After substituting~\eqref{Eq:product_pdf_final} into~\eqref{Eq:product_cdf_1}, the later can be rewritten as
	\begin{align} \label{Eq:product_cdf_2}
		F_\mathrm{Z}\left(z\right) = \frac{\delta^{\alpha+\zeta}}{2\left[\Gamma\left(\alpha\right)\Gamma\left(\zeta\right)\right]^2} \int_{-\infty}^{z} \left|x\right|^{\frac{\alpha+\zeta}{2}-1} G_{0,4}^{4,0} \left(\delta \left|x\right|\ \vline \ \begin{array}{cc}-\\ \frac{\alpha-\zeta}{2},-\frac{\alpha-\zeta}{2},\frac{\alpha-\zeta}{2},-\frac{\alpha-\zeta}{2}\end{array}\right) dx .
	\end{align}
	On the first hand, if $z>0$ the previous equation can transformed into 
	\begin{align} \label{Eq:product_cdf_3a}
		\begin{split}
			F_\mathrm{Z}\left(z\right) = &\frac{\delta^{\alpha+\zeta}}{2\left[\Gamma\left(\alpha\right)\Gamma\left(\zeta\right)\right]^2} \left[\int_{-\infty}^{0} \left(-x\right)^{\frac{\alpha+\zeta}{2}-1} G_{0,4}^{4,0} \left(\delta \left(-x\right)\ \vline \ \begin{array}{cc}-\\ \frac{\alpha-\zeta}{2},-\frac{\alpha-\zeta}{2},\frac{\alpha-\zeta}{2},-\frac{\alpha-\zeta}{2}\end{array}\right) dx \right. \\
			&\left.+ \int_{0}^{z} x^{\frac{\alpha+\zeta}{2}-1} G_{0,4}^{4,0} \left(\delta x\ \vline \ \begin{array}{cc}-\\ \frac{\alpha-\zeta}{2},-\frac{\alpha-\zeta}{2},\frac{\alpha-\zeta}{2},-\frac{\alpha-\zeta}{2}\end{array}\right) dx\right] ,
		\end{split}
	\end{align}
	or equivalently
	\begin{align} \label{Eq:product_cdf_4a}
		\begin{split}
			F_\mathrm{Z}\left(z\right) = &\frac{\delta^{\alpha+\zeta}}{2\left[\Gamma\left(\alpha\right)\Gamma\left(\zeta\right)\right]^2} \left[\int_{0}^{\infty} x^{\frac{\alpha+\zeta}{2}-1} G_{0,4}^{4,0} \left(\delta x\ \vline \ \begin{array}{cc}-\\ \frac{\alpha-\zeta}{2},-\frac{\alpha-\zeta}{2},\frac{\alpha-\zeta}{2},-\frac{\alpha-\zeta}{2}\end{array}\right) dx \right. \\
			&\left.+ \int_{0}^{z} x^{\frac{\alpha+\zeta}{2}-1} G_{0,4}^{4,0} \left(\delta x\ \vline \ \begin{array}{cc}-\\ \frac{\alpha-\zeta}{2},-\frac{\alpha-\zeta}{2},\frac{\alpha-\zeta}{2},-\frac{\alpha-\zeta}{2}\end{array}\right) dx\right] .
		\end{split}
	\end{align}
	Furthermore, we evaluate the integrals in~\eqref{Eq:product_cdf_4a} by using~\cite[eq. 24, 26]{Adamchik1990} and, thus, it can be rewritten as
	\begin{align}
		\begin{split}
			F_\mathrm{Z}\left(z\right) = &\frac{\delta^{\alpha+\zeta}}{2\left[\Gamma\left(\alpha\right)\Gamma\left(\zeta\right)\right]^2} \left[\delta^{\frac{\alpha+\zeta}{2}} \left[\Gamma\left(\alpha\right)\Gamma\left(\zeta\right)\right]^2 \right. \\ 
			&\left. + z^{\frac{\alpha+\zeta}{2}} G_{1,5}^{4,1} \left(\delta z\ \vline \ \begin{array}{cc} 1-\frac{\alpha+\zeta}{2} \\ \frac{\alpha-\zeta}{2}, -\frac{\alpha-\zeta}{2}, \frac{\alpha-\zeta}{2}, -\frac{\alpha-\zeta}{2}, -\frac{\alpha+\zeta}{2} \end{array}\right) \right] .
		\end{split}
	\end{align}
	On the other hand, if $z<0$ the previous equation can transformed into 
	\begin{align} \label{Eq:product_cdf_3b}
		F_\mathrm{Z}\left(z\right) = \frac{\delta^{\alpha+\zeta}}{2\left[\Gamma\left(\alpha\right)\Gamma\left(\zeta\right)\right]^2} \int_{-\infty}^{z} \left(-x\right)^{\frac{\alpha+\zeta}{2}-1} G_{0,4}^{4,0} \left(\delta \left(-x\right)\ \vline \ \begin{array}{cc}-\\ \frac{\alpha-\zeta}{2},-\frac{\alpha-\zeta}{2},\frac{\alpha-\zeta}{2},-\frac{\alpha-\zeta}{2}\end{array}\right) dx ,
	\end{align}
	which can be transformed into
	\begin{align} \label{Eq:product_cdf_4b}
		F_\mathrm{Z}\left(z\right) = \frac{\delta^{\alpha+\zeta}}{2\left[\Gamma\left(\alpha\right)\Gamma\left(\zeta\right)\right]^2} \int_{-z}^{\infty} x^{\frac{\alpha+\zeta}{2}-1} G_{0,4}^{4,0} \left(\delta x\ \vline \ \begin{array}{cc}-\\ \frac{\alpha-\zeta}{2},-\frac{\alpha-\zeta}{2},\frac{\alpha-\zeta}{2},-\frac{\alpha-\zeta}{2}\end{array}\right) dx ,
	\end{align}
	or equivalently
	\begin{align} \label{Eq:product_cdf_5b}
		\begin{split}
			F_\mathrm{Z}\left(z\right) = &\frac{\delta^{\alpha+\zeta}}{2\left[\Gamma\left(\alpha\right)\Gamma\left(\zeta\right)\right]^2} \left[\int_{0}^{\infty} x^{\frac{\alpha+\zeta}{2}-1} G_{0,4}^{4,0} \left(\delta x\ \vline \ \begin{array}{cc}-\\ \frac{\alpha-\zeta}{2},-\frac{\alpha-\zeta}{2},\frac{\alpha-\zeta}{2},-\frac{\alpha-\zeta}{2}\end{array}\right) dx \right. \\
			&\left.- \int_{0}^{-z} x^{\frac{\alpha+\zeta}{2}-1} G_{0,4}^{4,0} \left(\delta x\ \vline \ \begin{array}{cc}-\\ \frac{\alpha-\zeta}{2},-\frac{\alpha-\zeta}{2},\frac{\alpha-\zeta}{2},-\frac{\alpha-\zeta}{2}\end{array}\right) dx\right] .
		\end{split}
	\end{align}
	Furthermore, we evaluate the integrals in~\eqref{Eq:product_cdf_5b} by using~\cite[eq. 24, 26]{Adamchik1990} and, thus, it can be rewritten as
	\begin{align}
		\begin{split}
			F_\mathrm{Z}\left(z\right) = &\frac{\delta^{\alpha+\zeta}}{2\left[\Gamma\left(\alpha\right)\Gamma\left(\zeta\right)\right]^2} \left[\delta^{\frac{\alpha+\zeta}{2}} \left[\Gamma\left(\alpha\right)\Gamma\left(\zeta\right)\right]^2 \right. \\ 
			&\left. - \left(-z\right)^{\frac{\alpha+\zeta}{2}} G_{1,5}^{4,1} \left(\delta \left(-z\right)\ \vline \ \begin{array}{cc} 1-\frac{\alpha+\zeta}{2} \\ \frac{\alpha-\zeta}{2}, -\frac{\alpha-\zeta}{2}, \frac{\alpha-\zeta}{2}, -\frac{\alpha-\zeta}{2}, -\frac{\alpha+\zeta}{2} \end{array}\right) \right] .
		\end{split}
	\end{align}
	Finally, by combining the two branches presented in~\eqref{Eq:product_cdf_4a} and~\eqref{Eq:product_cdf_5b}, the CDF of the product distribution is presented in~\eqref{Eq:product_cdf_final}. This concludes the proof.
\end{proof}

\subsubsection{Ratio PDF}
\begin{theorem} \label{Th:Ratio_PDF}
	The PDF of the ratio of two zero mean iid variables that follow the SKD is given by
	\begin{align} \label{Eq:ratio_pdf_final3}
		f_\mathrm{Z}\left(z\right) = \frac{\delta^{\alpha+\zeta}}{\left|z\right|^{1+\alpha}}  \frac{2 \alpha \zeta \left(\alpha+\zeta\right)^2 B\left(2\alpha,2\zeta\right)}{\left(2\alpha+2\zeta\right)_{2} \left[B\left(\alpha,\zeta\right)\right]^2}\ _{2}\!F_{1}\!\!\left(\!\!1+\alpha+\zeta, 1+2\alpha ; 2+2\alpha+2\zeta ; 1\!-\!\frac{1}{\left|z\right|}\right).
	\end{align}		
\end{theorem}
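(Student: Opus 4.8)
The plan is to follow the template of the product-PDF proof (Theorem~\ref{Th:Product_PDF}), replacing the product convolution by the ratio convolution. For two independent RVs $X,Y$, the density of $Z=X/Y$ is
\begin{align}
	f_\mathrm{Z}\left(z\right) = \int_{-\infty}^{\infty} \left|x\right| f_\mathrm{X}\left(zx\right) f_\mathrm{Y}\left(x\right) dx .
\end{align}
First I would substitute the zero-mean SKD density~\eqref{Eq:PDF_2} (with $\gamma=0$) for both factors. Collecting the powers of $\left|x\right|$ yields the exponent $\alpha+\zeta-1$ and pulls out an overall factor $\left|z\right|^{\frac{\alpha+\zeta}{2}-1}$, leaving
\begin{align}
	f_\mathrm{Z}\left(z\right) = \frac{\delta^{\alpha+\zeta}\,\left|z\right|^{\frac{\alpha+\zeta}{2}-1}}{\left[\Gamma\left(\alpha\right)\Gamma\left(\zeta\right)\right]^2} \int_{-\infty}^{\infty} \left|x\right|^{\alpha+\zeta-1} K_{\alpha-\zeta}\!\left(2\sqrt{\delta\left|z\right|\left|x\right|}\right) K_{\alpha-\zeta}\!\left(2\sqrt{\delta\left|x\right|}\right) dx .
\end{align}
Since the integrand is even in $x$, I would fold the integral onto $(0,\infty)$ (the symmetry step used to pass from~\eqref{Eq:product_pdf_5} to~\eqref{Eq:product_pdf_6}), picking up a factor of two.

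Next I would express both Macdonald functions as Meijer-G functions via~\cite[eq. 8.4.23.1]{B:PrudnIII}, turning the half-line integral into a Mellin-type integral of a product of two $G_{0,2}^{2,0}$ functions whose arguments $\delta\left|z\right|x$ and $\delta x$ are \emph{both} linear in $x$. This is precisely the configuration evaluated by the product-of-two-$G$ formula of the 2.24.1 family in~\cite{B:PrudnIII}, which collapses the integral into a single Meijer-G of argument $\left|z\right|$ (equivalently $1/\left|z\right|$). As each factor is a $G_{0,2}^{2,0}$, the output is a $G_{2,2}^{2,2}$ whose parameters are built from the repeated pair $\pm\frac{\alpha-\zeta}{2}$ together with the shift induced by the monomial weight $x^{\alpha+\zeta-1}$; tracking the $\left|z\right|$-powers at this stage already reproduces the announced prefactor $\delta^{\alpha+\zeta}/\left|z\right|^{1+\alpha}$.

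Finally, since a $G_{2,2}^{2,2}$ is of Gauss type, I would reduce it to a $_2F_1$ through the Slater-type expansion~\cite[eq. 8.2.2.3]{B:PrudnIII}; the symmetric (repeated) choice of parameters $\pm\frac{\alpha-\zeta}{2}$ makes the two Slater branches coalesce, after the substitution $w=1-1/\left|z\right|$, into a single hypergeometric term. The residual work is bookkeeping: assembling the Gamma factors produced by the integration and the reduction into the compact constant $2\alpha\zeta\left(\alpha+\zeta\right)^2 B\left(2\alpha,2\zeta\right)/\{(2\alpha+2\zeta)_2\left[B\left(\alpha,\zeta\right)\right]^2\}$, using $B(a,b)=\Gamma(a)\Gamma(b)/\Gamma(a+b)$ and $(2\alpha+2\zeta)_2=(2\alpha+2\zeta)(2\alpha+2\zeta+1)$.

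I expect the principal obstacle to be the exact evaluation of the integral of the product of two equal-order Macdonald functions and, above all, matching its hypergeometric output to the stated parameter triple $\left(1+\alpha+\zeta,\,1+2\alpha;\,2+2\alpha+2\zeta\right)$: a direct application of the tabulated Weber--Schafheitlin $K_\nu K_\nu$ integral naturally produces a $_2F_1$ with parameters $\left(2\alpha,\,\alpha+\zeta;\,2\alpha+2\zeta\right)$ and the same argument $1-1/\left|z\right|$, so a contiguous/Euler-type transformation together with the attendant redistribution of Gamma factors is needed to cast the result—and its normalisation—into the exact form~\eqref{Eq:ratio_pdf_final3}. Checking the value at $\left|z\right|=1$, where the $_2F_1$ reduces to unity, provides the natural consistency test for the normalising constant.
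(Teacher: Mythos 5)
Your proposal follows the paper's own derivation step for step: the same ratio-convolution integral $f_\mathrm{Z}(z)=\int |y|\,f_\mathrm{X}(zy)f_\mathrm{Y}(y)\,dy$, the same specialisation to $\gamma=0$ and folding onto $(0,\infty)$ with a factor of two, the same Meijer-G representation of the two Macdonald functions via Prudnikov eq.~8.4.23.1, evaluation of the resulting product integral by the 2.24.1.3 formula into a $G_{2,2}^{2,2}$, and a final reduction to a single ${}_2F_1$ of argument $1-1/\left|z\right|$ followed by Gamma/Beta bookkeeping. The only cosmetic difference is the citation for the last step: the paper invokes Gradshteyn--Ryzhik eq.~9.34.7, which yields the one-term ${}_2F_1$ in $1-1/\left|z\right|$ directly (with parameters $1+\alpha+\zeta$, $1+2\alpha$; $2+2\alpha+2\zeta$), so neither the two-branch Slater expansion nor the contiguous/Euler manipulations you anticipate as the principal obstacle are actually needed.
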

\begin{proof}
	The pdf of the ratio of two iid variables that follow the SKD can be expressed as
	\begin{align} \label{Eq:ratio_pdf_1}
		f_\mathrm{Z}\left(z\right) = \int_{-\infty}^{\infty} f_\mathrm{X}\left(z y\right) f_\mathrm{Y}\left(y\right) \left|y\right| dy ,
	\end{align}
	where $z=\frac{x}{y}$. Next, after substituting~\eqref{Eq:PDF_2} into~\eqref{Eq:ratio_pdf_1}, the latter can be rewritten as
	\begin{align} \label{Eq:ratio_pdf_2}
		\begin{split}
			f_\mathrm{Z}\left(z\right) = \frac{\delta^{\alpha+\zeta}}{\left[\Gamma\left(\alpha\right)\Gamma\left(\zeta\right)\right]^2} &\int_{-\infty}^{\infty} \left|y\right| \left(\left|y-\gamma\right|\left|z y-\gamma\right|\right)^{\frac{\alpha+\zeta}{2}-1} \\
			& \times K_{\alpha-\zeta}\left(2\sqrt{\delta\left|y-\gamma\right|}\right) K_{\alpha-\zeta}\left(2\sqrt{\delta\left|z y-\gamma\right|}\right) dy ,
		\end{split}
	\end{align}
	Furthermore, by expressing the $K_v$ function in terms of the Meijer-G function as in~\cite[eq. 8.4.23.1]{B:PrudnIII}, the previous equation can be equivalently written as
	\begin{align} \label{Eq:ratio_pdf_3}
		\begin{split}
			f_\mathrm{Z}\left(z\right) = &\frac{\delta^{\alpha+\zeta}}{\left[2\Gamma\left(\alpha\right)\Gamma\left(\zeta\right)\right]^2} \int_{-\infty}^{\infty} \left|y\right| \left(\left|y-\gamma\right|\left|z y-\gamma\right|\right)^{\frac{\alpha+\zeta}{2}-1} \\
			& \times G_{0,2}^{2,0} \left[\delta \left|y-\gamma\right| \ \vline \ \begin{array}{cc}-\\ \frac{\alpha-\zeta}{2},-\frac{\alpha-\zeta}{2}\end{array}\right] G_{0,2}^{2,0} \left[\delta \left|z y-\gamma\right| \ \vline \ \begin{array}{cc}-\\ \frac{\alpha-\zeta}{2},-\frac{\alpha-\zeta}{2}\end{array}\right] dy .
		\end{split}
	\end{align}
	Next, we assume zero mean symmetric K iid variables and~\eqref{Eq:ratio_pdf_3} can be expressed as
	\begin{align} \label{Eq:ratio_pdf_4}
		\begin{split}
			f_\mathrm{Z}\left(z\right) = &\frac{\delta^{\alpha+\zeta} \left|z\right|^{\frac{\alpha+\zeta}{2}-1}}{\left[2\Gamma\left(\alpha\right)\Gamma\left(\zeta\right)\right]^2} \left[\int_{0}^{\infty} y^{\alpha+\zeta-1}  G_{0,2}^{2,0} \left(\delta y \ \vline \ \begin{array}{cc}-\\ \frac{\alpha-\zeta}{2},-\frac{\alpha-\zeta}{2}\end{array}\right) G_{0,2}^{2,0} \left(\delta \left|z\right| y \ \vline \ \begin{array}{cc}-\\ \frac{\alpha-\zeta}{2},-\frac{\alpha-\zeta}{2}\end{array}\right) dy \right. \\
			&+ \left. \int_{-\infty}^{0}  \left(-y\right)^{\alpha+\zeta-1}  G_{0,2}^{2,0} \left(-\delta y \ \vline \ \begin{array}{cc}-\\ \frac{\alpha-\zeta}{2},-\frac{\alpha-\zeta}{2}\end{array}\right) G_{0,2}^{2,0} \left(- \delta \left|z\right| y \ \vline \ \begin{array}{cc}-\\ \frac{\alpha-\zeta}{2},-\frac{\alpha-\zeta}{2}\end{array}\right) dy \right] ,
		\end{split}
	\end{align}
	which, after using the substitution $y=-y$ on the second integral can be rewritten as
	\begin{align} \label{Eq:ratio_pdf_5}
		f_\mathrm{Z}\left(z\right) = \frac{\delta^{\alpha+\zeta} \left|z\right|^{\frac{\alpha+\zeta}{2}-1}}{2\left[\Gamma\left(\alpha\right)\Gamma\left(\zeta\right)\right]^2} \int_{0}^{\infty} y^{\alpha+\zeta-1}  G_{0,2}^{2,0} \left(\delta y \ \vline \ \begin{array}{cc}-\\ \frac{\alpha-\zeta}{2},-\frac{\alpha-\zeta}{2}\end{array}\right) G_{0,2}^{2,0} \left(\delta \left|z\right| y \ \vline \ \begin{array}{cc}-\\ \frac{\alpha-\zeta}{2},-\frac{\alpha-\zeta}{2}\end{array}\right) dy ,
	\end{align}
	Finally, since $\arg \delta < \pi$ and $\arg \delta\left|z\right| < \pi$, we can perform the integration based on~\cite[eq. 2.24.1.3]{B:PrudnIII}. Thus, the PDF of the ratio distribution can be expressed as
	\begin{align} \label{Eq:ratio_pdf_final}
		f_\mathrm{Z}\left(z\right) = \frac{\delta^{\alpha+\zeta} \left|z\right|^{\frac{\alpha+\zeta}{2}-1}}{2\left[\Gamma\left(\alpha\right)\Gamma\left(\zeta\right)\right]^2} G_{2,2}^{2,2} \left( \delta \left|z\right| \ \vline \ \begin{array}{cc}1-\frac{3\alpha+\zeta}{2},1-\frac{\alpha+3\zeta}{2}\\ 1-\frac{\alpha-\zeta}{2},1+\frac{\alpha-\zeta}{2}\end{array}\right) .
	\end{align}
	Furthermore, after expressing Meijer-G in terms of the $_{2}\!F_{1}$ hypergeometric function based on~\cite[eq. 9.34.7]{Gradshteyn2014}, can be written as
	\begin{align} \label{Eq:ratio_pdf_final2}
		\begin{split}
			f_\mathrm{Z}\left(z\right) = &\frac{\delta^{\alpha+\zeta}}{\left|z\right|^{1+\alpha}} \frac{\left[\Gamma\left(1+\alpha+\zeta\right)\right]^2 \Gamma\left(1+2\alpha\right) \Gamma\left(1+2\zeta\right)}{2\left[\Gamma\left(\alpha\right)\Gamma\left(\zeta\right)\right]^2 \Gamma\left(2+2\alpha+2\zeta\right)} \\
			&\times\ _{2}\!F_{1}\left(1+\alpha+\zeta, 1+2\alpha ; 2+2\alpha+2\zeta ; 1-\frac{1}{\left|z\right|}\right).
		\end{split}
	\end{align}
	and, after performing basic transformation of the Gamma function, it can be transformed as shown in~\eqref{Eq:ratio_pdf_final3}. This concludes the proof.
\end{proof}

\subsubsection{Ratio CDF}
\begin{theorem} \label{Th:Ratio_CDF}
	The CDF of the ratio of two zero mean iid variables that follow the SKD is given by
	\begin{align} \label{Eq:ratio_cdf_final}
		\begin{split}
			F_\mathrm{Z}\left(z\right) = &\frac{\delta^{\alpha+\zeta}}{2\left[\Gamma\left(\alpha\right)\Gamma\left(\zeta\right)\right]^2} \left[\delta^{\frac{\alpha+\zeta}{2}} \Gamma\left(1+\alpha\right) \Gamma\left(1+\zeta\right) \Gamma\left(\alpha\right) \Gamma\left(\zeta\right) \right. \\ 
			&\left. + \mathrm{sgn}\left(z\right) \left|z\right|^{\frac{\alpha+\zeta}{2}} G_{3,3}^{2,3} \left(\delta \left|z\right|\ \vline \ \begin{array}{cc} 1-\frac{3\alpha+\zeta}{2}, 1-\frac{\alpha+3\zeta}{2}, 1-\frac{\alpha+\zeta}{2} \\ 1-\frac{\alpha-\zeta}{2}, 1+\frac{\alpha-\zeta}{2}, -\frac{\alpha+\zeta}{2} \end{array}\right) \right] .
		\end{split}
	\end{align}
\end{theorem}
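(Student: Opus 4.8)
The plan is to follow the route already used for the product CDF in Theorem~\ref{Th:Product_CDF}: integrate the ratio PDF term by term. First I would start from the definition $F_\mathrm{Z}(z)=\int_{-\infty}^{z} f_\mathrm{Z}(x)\,dx$ and substitute the Meijer-G form \eqref{Eq:ratio_pdf_final} of the ratio density. Because that density depends on $x$ only through $|x|$, the integrand is even in $x$, which is precisely what permits the same split-by-sign strategy as in the product case.

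For $z>0$, I would write $F_\mathrm{Z}(z)=\int_{-\infty}^{0} f_\mathrm{Z}(x)\,dx+\int_{0}^{z} f_\mathrm{Z}(x)\,dx$ and fold the first piece onto $(0,\infty)$ through the substitution $x\mapsto -x$, turning it into $\int_{0}^{\infty} f_\mathrm{Z}(x)\,dx$. The semi-infinite integral is then evaluated by the Mellin-type identity~\cite[eq. 2.24.2.1]{B:PrudnIII}; with $s=\frac{\alpha+\zeta}{2}$ the four Gamma arguments produced by the $G_{2,2}^{2,2}$ function collapse to $\Gamma(1+\alpha)\Gamma(1+\zeta)\Gamma(\alpha)\Gamma(\zeta)$, which is exactly the constant term of \eqref{Eq:ratio_cdf_final}. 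The remaining finite integral $\int_{0}^{z} x^{\frac{\alpha+\zeta}{2}-1} G_{2,2}^{2,2}(\delta x)\,dx$ is handled by the indefinite-integral formula~\cite[eq. 26]{Adamchik1990}, which adjoins $1-\frac{\alpha+\zeta}{2}$ to the upper row and $-\frac{\alpha+\zeta}{2}$ to the lower row, raising the function to the $G_{3,3}^{2,3}$ of the statement and supplying the factor $z^{\frac{\alpha+\zeta}{2}}$.

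For $z<0$, evenness gives $\int_{-\infty}^{z} f_\mathrm{Z}(x)\,dx=\int_{-z}^{\infty} f_\mathrm{Z}(x)\,dx=\int_{0}^{\infty} f_\mathrm{Z}(x)\,dx-\int_{0}^{-z} f_\mathrm{Z}(x)\,dx$, so the same constant term reappears while the $G_{3,3}^{2,3}$ contribution now enters with a minus sign and argument $\delta(-z)$. Merging the two branches via $\mathrm{sgn}(z)$ and $|z|$ collapses them into the single closed form \eqref{Eq:ratio_cdf_final}.

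I expect the main obstacle to be the bookkeeping in the two Meijer-G integrations rather than any conceptual difficulty: I must check the convergence and argument conditions ($\arg(\delta|z|)<\pi$, $\delta\neq 0$, and the pole-separation requirements on the parameter rows) before invoking~\cite[eq. 2.24.2.1]{B:PrudnIII} and~\cite[eq. 26]{Adamchik1990}, and then verify that the adjoined parameters match the target $G_{3,3}^{2,3}$ up to the permutations allowed within the first $n$ upper indices and within the trailing lower indices. Everything else — the fold onto $(0,\infty)$ and the sign recombination — is routine and parallels the product CDF derivation almost verbatim.
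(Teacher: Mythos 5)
Your proposal follows essentially the same route as the paper's own proof: substitute the $G_{2,2}^{2,2}$ form of the ratio PDF into $F_\mathrm{Z}(z)=\int_{-\infty}^{z}f_\mathrm{Z}(x)\,dx$, split by the sign of $z$, fold the negative half-line onto $(0,\infty)$ using evenness, evaluate the semi-infinite integral by the Mellin-transform identity (yielding $\Gamma(1+\alpha)\Gamma(1+\zeta)\Gamma(\alpha)\Gamma(\zeta)$) and the finite integral by Adamchik's formula (yielding the $G_{3,3}^{2,3}$ term), then recombine the branches via $\mathrm{sgn}(z)$ and $|z|$. The only cosmetic difference is that you cite Prudnikov~\cite[eq. 2.24.2.1]{B:PrudnIII} for the semi-infinite piece where the paper cites~\cite[eq. 24]{Adamchik1990}; these are equivalent identities, so the argument is the same.
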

\begin{proof}
	The cdf of the ratio of two RVs that follow the standard SKD with identical parameters is given by
	\begin{align} \label{Eq:ratio_cdf_1}
		F_\mathrm{Z}\left(z\right) = \int_{-\infty}^{z} f_\mathrm{Z}\left(x\right)dx ,
	\end{align}
	After substituting~\eqref{Eq:ratio_pdf_final} into~\eqref{Eq:ratio_cdf_1}, the later can be rewritten as
	\begin{align} \label{Eq:ratio_cdf_2}
		F_\mathrm{Z}\left(z\right) = \frac{\delta^{\alpha+\zeta}}{2\left[\Gamma\left(\alpha\right)\Gamma\left(\zeta\right)\right]^2} \int_{-\infty}^{z} \left|x\right|^{\frac{\alpha+\zeta}{2}-1} G_{2,2}^{2,2} \left(\delta \left|x\right|\ \vline \ \begin{array}{cc} 1-\frac{3\alpha+\zeta}{2}, 1-\frac{\alpha+3\zeta}{2} \\ 1-\frac{\alpha-\zeta}{2}, 1+\frac{\alpha-\zeta}{2}\end{array}\right) dx .
	\end{align}
	On the first hand, if $z>0$ the previous equation can transformed into 
	\begin{align} \label{Eq:ratio_cdf_3a}
		\begin{split}
			F_\mathrm{Z}\left(z\right) = &\frac{\delta^{\alpha+\zeta}}{2\left[\Gamma\left(\alpha\right)\Gamma\left(\zeta\right)\right]^2} \left[\int_{-\infty}^{0} \left(-x\right)^{\frac{\alpha+\zeta}{2}-1} G_{2,2}^{2,2} \left(\delta \left(-x\right)\ \vline \ \begin{array}{cc} 1-\frac{3\alpha+\zeta}{2}, 1-\frac{\alpha+3\zeta}{2} \\ 1-\frac{\alpha-\zeta}{2}, 1+\frac{\alpha-\zeta}{2}\end{array}\right) dx \right. \\
			&\left.+ \int_{0}^{z} x^{\frac{\alpha+\zeta}{2}-1} G_{2,2}^{2,2} \left(\delta x\ \vline \ \begin{array}{cc} 1-\frac{3\alpha+\zeta}{2}, 1-\frac{\alpha+3\zeta}{2} \\ 1-\frac{\alpha-\zeta}{2}, 1+\frac{\alpha-\zeta}{2}\end{array}\right) dx\right] ,
		\end{split}
	\end{align}
	or equivalently
	\begin{align} \label{Eq:ratio_cdf_4a}
		\begin{split}
			F_\mathrm{Z}\left(z\right) = &\frac{\delta^{\alpha+\zeta}}{2\left[\Gamma\left(\alpha\right)\Gamma\left(\zeta\right)\right]^2} \left[\int_{0}^{\infty} x^{\frac{\alpha+\zeta}{2}-1} G_{2,2}^{2,2} \left(\delta x\ \vline \ \begin{array}{cc} 1-\frac{3\alpha+\zeta}{2}, 1-\frac{\alpha+3\zeta}{2} \\ 1-\frac{\alpha-\zeta}{2}, 1+\frac{\alpha-\zeta}{2}\end{array}\right) dx \right. \\
			&\left.+ \int_{0}^{z} x^{\frac{\alpha+\zeta}{2}-1} G_{2,2}^{2,2} \left(\delta x\ \vline \ \begin{array}{cc} 1-\frac{3\alpha+\zeta}{2}, 1-\frac{\alpha+3\zeta}{2} \\ 1-\frac{\alpha-\zeta}{2}, 1+\frac{\alpha-\zeta}{2}\end{array}\right) dx\right] .
		\end{split}
	\end{align}
	Furthermore, we evaluate the integrals in~\eqref{Eq:ratio_cdf_4a} by using~\cite[eq. 24, 26]{Adamchik1990} and, thus, it can be rewritten as
	\begin{align} \label{Eq:ratio_cdf_5a}
		\begin{split}
			F_\mathrm{Z}\left(z\right) = &\frac{\delta^{\alpha+\zeta}}{2\left[\Gamma\left(\alpha\right)\Gamma\left(\zeta\right)\right]^2} \left[\delta^{\frac{\alpha+\zeta}{2}} \Gamma\left(1+\alpha\right) \Gamma\left(1+\zeta\right) \Gamma\left(\alpha\right) \Gamma\left(\zeta\right) \right. \\ 
			&\left. + z^{\frac{\alpha+\zeta}{2}} G_{3,3}^{2,3} \left(\delta z\ \vline \ \begin{array}{cc} 1-\frac{3\alpha+\zeta}{2}, 1-\frac{\alpha+3\zeta}{2}, 1-\frac{\alpha+\zeta}{2} \\ 1-\frac{\alpha-\zeta}{2}, 1+\frac{\alpha-\zeta}{2}, -\frac{\alpha+\zeta}{2} \end{array}\right) \right] .
		\end{split}
	\end{align}
	On the other hand, if $z<0$ the previous equation can transformed into 
	\begin{align} \label{Eq:ratio_cdf_3b}
		F_\mathrm{Z}\left(z\right) = \frac{\delta^{\alpha+\zeta}}{2\left[\Gamma\left(\alpha\right)\Gamma\left(\zeta\right)\right]^2} \int_{-\infty}^{z} \left(-x\right)^{\frac{\alpha+\zeta}{2}-1} G_{2,2}^{2,2} \left(\delta \left(-x\right)\ \vline \ \begin{array}{cc} 1-\frac{3\alpha+\zeta}{2}, 1-\frac{\alpha+3\zeta}{2} \\ 1-\frac{\alpha-\zeta}{2}, 1+\frac{\alpha-\zeta}{2}\end{array} \right) dx ,
	\end{align}
	which can be transformed into
	\begin{align} \label{Eq:ratio_cdf_4b}
		F_\mathrm{Z}\left(z\right) = \frac{\delta^{\alpha+\zeta}}{2\left[\Gamma\left(\alpha\right)\Gamma\left(\zeta\right)\right]^2} \int_{-z}^{\infty} x^{\frac{\alpha+\zeta}{2}-1} G_{2,2}^{2,2} \left(\delta x\ \vline \ \begin{array}{cc} 1-\frac{3\alpha+\zeta}{2}, 1-\frac{\alpha+3\zeta}{2} \\ 1-\frac{\alpha-\zeta}{2}, 1+\frac{\alpha-\zeta}{2}\end{array} \right) dx ,
	\end{align}
	or equivalently
	\begin{align} \label{Eq:ratio_cdf_5b}
		\begin{split}
			F_\mathrm{Z}\left(z\right) = &\frac{\delta^{\alpha+\zeta}}{2\left[\Gamma\left(\alpha\right)\Gamma\left(\zeta\right)\right]^2} \left[\int_{0}^{\infty} x^{\frac{\alpha+\zeta}{2}-1} G_{2,2}^{2,2} \left(\delta x\ \vline \ \begin{array}{cc} 1-\frac{3\alpha+\zeta}{2}, 1-\frac{\alpha+3\zeta}{2} \\ 1-\frac{\alpha-\zeta}{2}, 1+\frac{\alpha-\zeta}{2}\end{array} \right) dx \right. \\
			&\left.- \int_{0}^{-z} x^{\frac{\alpha+\zeta}{2}-1} G_{2,2}^{2,2} \left(\delta x\ \vline \ \begin{array}{cc} 1-\frac{3\alpha+\zeta}{2}, 1-\frac{\alpha+3\zeta}{2} \\ 1-\frac{\alpha-\zeta}{2}, 1+\frac{\alpha-\zeta}{2}\end{array} \right) dx\right] .
		\end{split}
	\end{align}
	Furthermore, we evaluate the integrals in~\eqref{Eq:ratio_cdf_5b} by using~\cite[eq. 24, 26]{Adamchik1990} and, thus, it can be rewritten as
	\begin{align} \label{Eq:ratio_cdf_6b}
		\begin{split}
			F_\mathrm{Z}\left(z\right) = &\frac{\delta^{\alpha+\zeta}}{2\left[\Gamma\left(\alpha\right)\Gamma\left(\zeta\right)\right]^2} \left[\delta^{\frac{\alpha+\zeta}{2}} \Gamma\left(1+\alpha\right) \Gamma\left(1+\zeta\right) \Gamma\left(\alpha\right) \Gamma\left(\zeta\right) \right. \\ 
			&\left. - \left(-z\right)^{\frac{\alpha+\zeta}{2}} G_{3,3}^{2,3} \left(\delta \left(-z\right)\ \vline \ \begin{array}{cc} 1-\frac{3\alpha+\zeta}{2}, 1-\frac{\alpha+3\zeta}{2}, 1-\frac{\alpha+\zeta}{2} \\ 1-\frac{\alpha-\zeta}{2}, 1+\frac{\alpha-\zeta}{2}, -\frac{\alpha+\zeta}{2} \end{array}\right) \right] .
		\end{split}
	\end{align}
	Finally, by combining the two branches presented in~\eqref{Eq:ratio_cdf_5a} and~\eqref{Eq:ratio_cdf_6b}, the CDF of the product distribution is presented in~\eqref{Eq:ratio_cdf_final}. This concludes the proof.
\end{proof}

\section{The skew-symmetric K-distribution} \label{S:skew-SKD}
The skew-SKD is a continuous probability distribution that generalizes the SKD to enable non-zero skewness values. To achieve this, an extra parameter, $\lambda$, that regulated the skewness of the distribution is introduced. The PDF of the skew-SKD can be obtained by 
\begin{align} \label{Eq:skew_PDF_1}
	\phi\left(x\right) = 2 f\left(x\right) F\left(\lambda x\right) ,
\end{align}
which, after using~\eqref{Eq:PDF_2} and~\eqref{Eq:CDF_final}, can be rewritten as
\begin{align} \label{Eq:skew_PDF_2}
	\begin{split}
		\phi\left(x\right) = &\frac{\delta^{\frac{\alpha + \zeta}{2}}}{\Gamma\left(\alpha\right) \Gamma\left(\zeta\right)} \left|x - \gamma\right|^{\frac{\alpha + \zeta}{2} - 1} K_{\alpha - \zeta} \left(2\sqrt{\delta \left|x - \gamma\right|}\right) \Bigg[1 \Bigg.\\
		&\left. + \frac{\mathrm{sgn}\left(\lambda x-\gamma\right) \delta^{\frac{\alpha + \zeta}{2}}}{\Gamma(\alpha) \Gamma(\zeta)}|\lambda x-\gamma|^{\frac{\alpha + \zeta}{2}} G_{1,3}^{2,1} \left(\delta |\lambda x-\gamma| \ \vline \ \begin{array}{cc}1-\frac{\alpha+\zeta}{2}\\ \frac{\alpha-\zeta}{2},-\frac{\alpha-\zeta}{2},-\frac{\alpha+\zeta}{2}\end{array}\right)\right] .
	\end{split}
\end{align}

\section{Conclusions} \label{S:Conclusions}
Our work on the SKD and skew-SKD was motivated by our interest on machine learning and, in particular, on Bayesian inference models. In more detail, the promising potential of distributions with adaptable tail nature in Bayesian learning applications, such as sparse signal reconstruction, incentivised our efforts towards finding more general distribution families that can more accurately model such complex dynamics. Specifically, we introduced a four-parameter distribution that is derived as a mixture of the parental three-parameter reflected Gamma distribution by using the two-parameter Gamma as prior. The proposed distribution is termed Symmetric K distribution due to its dependence on the K-function as well as its similarities to the K-distribution. Closed-form expressions of the basic metrics of the SKD are derived, namely PDF, CDF, moments, cumulants, order statistics, as well as product and ratio distributions. In addition, the skew-SKD is calculated in order to enable non-zero skewness values. The derived distributions exhibit great promise for applications in various fields, including machine learning, Bayesian learning, communications and econometric models.

\bibliographystyle{Chicago}
\bibliography{bibliography}

\end{document}